\newcommand{\be}{\begin{eqnarray}}
\newcommand{\ee}{\end{eqnarray}}
\newcommand{\bn}{\begin{eqnarray*}}       
\newcommand{\en}{\end{eqnarray*}}
\newtheorem{theorem}{Theorem}
\newtheorem{lemma}{Lemma}
\newtheorem{remark}{Remark}
\newcommand{\bea}{\begin{eqnarray*}}
\newcommand{\eea}{\end{eqnarray*}}
\newcommand{\ben}{\begin{eqnarray}}
\newcommand{\een}{\end{eqnarray}}
\newcommand{\beq}{\begin{equation}}
\newcommand{\eeq}{\end{equation}}
\newcommand{\R}{\ensuremath{\mathbb{R}}}
\newcommand{\eps}{\varepsilon}
\newcommand{\de}{\delta}
\newcommand{\si}{\sigma}
\newcommand{\pint}{PV\!\!\!\int}
\newcommand{\om}{\omega}
\renewcommand{\d}{\partial}
\newcommand{\te}{\theta}
\newcommand{\cT}{\mathcal{G}}
\newcommand{\ka}{\kappa}
\begin{document}

\title{Cusp formation for a nonlocal evolution equation}


\author{Vu Hoang         \and Maria Radosz}




\maketitle

\begin{abstract}
In this paper, we introduce a nonlocal evolution equation inspired by the C\'ordoba-C\'ordoba-Fontelos nonlocal transport equation. 
The C\'ordoba-C\'ordoba-Fontelos equation can be regarded as a model for the 2D surface quasigeostrophic equation or the Birkhoff-Rott equation.
We prove blowup in finite time, and more importantly, investigate conditions under which the solution forms a cusp in finite time.
\end{abstract}

\section{Introduction}
\label{intro}
The non-linear, non-local active scalar equation 
\beq\label{CCF}
\te_t + u \te_y = 0, ~~ u = H\te
\eeq
for $\te = \te(y, t)$ was proposed by A. C\'ordoba, D. C\'ordoba and A. Fontelos \cite{CCF} as a one-dimensional analogue of the 
2D surface quasigeostrophic equation (SQG) \cite{constantin1994formation}
and the Birkhoff-Rott equation \cite{Saffmann}. We refer to \eqref{CCF} as the CCF equation.

The study of one-dimensional equations modeling various aspects of three- and two-dimensional fluid mechanics problems has a
long-standing tradition (\cite{Castro,CLM, HouLuo1,HouLi}). 

It is well-known that smooth solutions of \eqref{CCF} lose their regularity in finite time (\cite{CCF,SashaRev,silvestre2014transport}). 
However, little is understood about the precise way in which the singularity forms. A particularly simple scenario 
is as follows: We consider smooth, even solutions, i.e. $\te(-y, t) = \te(y, t)$ such that
the initial condition $\te_0(y)$ has a single nondegenerate maximum and decays sufficiently quickly at infinity.
Numerical simulations \cite{CCF, silvestre2014transport} seem to indicate that the solution forms a \emph{cusp} at
the singular time $T_s$, so that 
\beq\label{universalCusp}
\te(y, T_s) \sim \te_0(0)- \text{const.}|y|^{1/2}
\eeq
close to the origin. In \cite{silvestre2014transport}, the authors make the conjecture that in a generic sense,
all maxima eventually develop into cusps of the form \eqref{universalCusp}.

Besides the original blowup proof \cite{CCF}, various proofs of blowup for \eqref{CCF} have been found recently \cite{silvestre2014transport}.
In \cite{Tam}, a discrete model for \eqref{CCF} was studied.

Many of the known proofs work when an additional viscosity term is present in \eqref{CCF}. On the other hand none of them explains the cusp formation since
the shape of the solution is not controlled at the singular time. The task of establishing that solutions of \eqref{CCF}
exhibit cusp formation appears to be challenging.

We believe that making progress on the question of cusp formation is important for the following reasons. In order to prove cusp formation
for \eqref{CCF}, we need to develop insight into the intrinsic mechanism of singularity formation. This mechanism is not well-understood
and seems to favor blowup at certain predetermined points. Moreover, the solution apparently remains regular outside of these points
- even at the time of blowup.

This situation is similar to recent in numerical observations for the three-dimensional Euler equations
by T. Hou and G. Luo \cite{HouLuo1}. There, the authors exhibit solutions for which the magnitude of the vorticity
vector appears to become infinite at an intersection point of a domain boundary with a symmetry axis.
Their blowup scenario is referred to as the \emph{hyperbolic flow scenario}.
In \cite{HouLuo2}, a 1D model problem for axisymmetric flow was introduced, and finite time blowup
was proven in \cite{sixAuthors}. We would also like to mention \cite{Pengfei}, in which the authors study a 
simplified model (proposed in \cite{CKY}) of the equation introduced by T. Hou and G. Luo in \cite{HouLuo2}, 
proving the existence of self-similar solutions using computer-assisted means.

A sufficiently detailed understanding of the singularity formation mechanism for nonlocal active scalar equations
like \eqref{CCF} is very likely a prerequisite for obtaining a blowup proof for the 3D Euler equations.  
We refer to \cite{ConstRev} for a thorough discussion of this outstanding and challenging problem.  

In this paper, our goal is to lay groundwork for future investigation by studying a nonlocal
active scalar equation for which the singular behavior of the solution at the blowup time can be characterized.
We develop new techniques that allow us to obtain some control on the singular shape.  
Our model problem is related to \eqref{CCF} and reads as follows:
\beq\label{outerModel}
\begin{split}
&\te_t + u \te_y = 0, \\
&u_y (y, t) = \int_{y}^\infty \frac{  \te_y(z, t)}{z}~dz, ~~u(0, t) = 0.
\end{split}
\eeq
We consider solutions $\te$ defined on $[0, \infty)$ and think of them as being
extended to $\R$ as even functions. 
Our main result, Theorem \ref{blowupThm}, states that solutions blow up in finite time, either
forming a cusp or a needle-like singularity. To the best of our knowledge, this is the first
time such a scenario has been rigorously established.

Our paper is structured as follows: in the remaining part of this introductory section, we motivate the introduction of
\eqref{outerModel}. In section \ref{mainRes}, we state our main results.
The remainder of our paper provides the proofs of Theorems 
\ref{thmWellposed} and \ref{blowupThm}.

\subsection{Derivation of the model equation.}
An essential issue with nonlocal transport equations is predicting from the outset where the 
singularity will form. The even symmetry of $\te$ helps, by creating a stagnant
point of the flow at the origin. At this stage of our understanding of blowup scenarios, however, this still does not rule out 
the possibility of a singularity also forming somewhere else. 
Our goal is to simplify by writing down a model where singularities can only form at a given point,
following ideas of ``hyperbolic approximation"  or ``hyperbolic cut-off" 
from \cite{kiselev2013small} and \cite{CKY}.  We first describe
how this applies to \eqref{CCF}.

The velocity gradient of equation \eqref{CCF} is given by $u_y = H\te_y$. Using the odd symmetry of
$\te_y$, we can write
\bea
u_y(y, t) = \pint_0^{2 y} \frac{2 z \te_y(z, t)\, dz}{z^2-y^2} + \int_{2 y}^\infty \frac{2 z \te_y(z, t)\, dz}{z^2-y^2}. 
\eea
In a first approximation we only retain the long-range part of the interaction, 
and therefore consider the model Biot-Savart law
\beq\label{cutOff}
u_y(y, t) = \int_{y}^\infty \frac{  \te_y(z, t)\, dz}{z}, ~~u(0, t) = 0.
\eeq
The long-range part of the interaction has been approximated by shifting the 
kernel singularity to the origin (and we have also dropped the non-essential factor $2$).
In general, the hyperbolic cut-off emphasizes the non-local role of the fluid surrounding the singular point
as a leading part of the blowup mechanism. Due to certain monotonicity properties,
the intrinsinc blowup mechanism becomes transparent (see Remark \ref{expl}).

Our main inspiration to derive \eqref{outerModel} came from \cite{kiselev2013small}, where 
the hyperbolic flow scenario for the 2D Euler equation on a disc was considered. 
There, a useful new representation of the velocity field for the 2D Euler equation was discovered. Briefly,
the velocity field close to a stagnant point of the flow at the intersection of a domain boundary and a symmetry
axis of the solution was decomposed into a main part and error term
$$
u_1(x_2, x_2, t) = - x_1\iint_{y_1\geq x_1, y_2\geq x_2}\frac{y_1 y_2}{|y|^4}\om(y, t) ~dy_1 dy_2 + x_1 B_1(x_1, x_2, t).
$$
The error term $B_1$ could be controlled in a certain sector. Note that, as in \eqref{cutOff},  the main part of the
velocity field is given by an integral over a kernel with singularity at $y = (0, 0)$.

Finally we note that K.~Choi, A.~Kiselev and Y.~Yao use a similar process in \cite{CKY} 
to approximate a one-dimensional model of 
an equation proposed by T.~Hou and G.~Luo in \cite{HouLuo2}.

\section{Main results}\label{mainRes}
We prove that for the model \eqref{outerModel}, solutions blow up in
finite time for generic data, and that the singularity is either a cusp or a needle-like
singularity.  
We are looking for solutions of \eqref{outerModel} in the class 
\beq
\theta(y, t) \in C( [0, T),\,C_0^2(\R^+))\cap C^1([0, T), \, C^1(\R^+)), ~~\theta_y(0, t) = 0,
\eeq
where $\R^+ = [0, \infty)$ and $C_0^2(\R^+)$ denotes twice continuously differentiable
functions with compact support in $\R^+$.
This is a natural class for solutions of \eqref{outerModel} which have a single maximum at the origin. 

Our results are as follows: 
\begin{theorem}\label{thmWellposed}
The problem \eqref{outerModel} is locally well-posed for compactly supported initial data 
\beq\nonumber
\theta_0 \in C_0^2(\R^+), ~~\d_y\te_0(0)=0.
\eeq
\end{theorem}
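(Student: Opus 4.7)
The plan is to combine the Lagrangian (characteristics) framework with a Picard-type fixed point iteration. The key structural feature to exploit is that $u(0,t)=0$, so that the origin is a fixed point of the flow; this automatically propagates the boundary condition $\theta_y(0,t)=0$ and keeps the support of $\theta(\cdot,t)$ contained in $\R^+$. Defining the characteristic $\Phi(y,t)$ by $\dot\Phi = u(\Phi,t)$, $\Phi(y,0)=y$, the candidate solution is $\theta(y,t) = \theta_0(\Phi^{-1}(y,t))$, and one must solve self-consistently for $\Phi$.

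First, I would analyze the Biot--Savart operator $\mathcal{B}[\theta](y) := \int_y^\infty \theta_y(z)/z\, dz$. For $\theta \in C_0^2(\R^+)$ with $\theta_y(0)=0$, the integrand $\theta_y(z)/z$ is bounded near the origin (uniformly by $\|\theta_{yy}\|_\infty$, via the mean value theorem) and vanishes outside $\mathrm{supp}\,\theta$, so the integral converges and yields $u_y$. Integrating gives $u(y,t)$ with $u(0,t)=0$, and one more differentiation produces $u_{yy}(y,t) = -\theta_y(y,t)/y$, continuous up to the origin by the boundary condition on $\theta_y$. From this I would derive bounds of the form
$$\|u[\theta]\|_{C^2([0,R])} \le C(R)\,\|\theta\|_{C^2(\R^+)},$$
together with a Lipschitz estimate
$$\|u[\theta_1]-u[\theta_2]\|_{C^1([0,R])} \le C(R)\,\|\theta_1-\theta_2\|_{C^2(\R^+)}.$$

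Next, I would set up the iteration in $X_T := C([0,T], C_0^2(\R^+))$. Starting from $\theta^{(0)}(y,t):=\theta_0(y)$, at step $n$ compute $u^{(n)}:=u[\theta^{(n)}]$, solve the ODE $\dot\Phi^{(n)}=u^{(n)}(\Phi^{(n)},t)$ with $\Phi^{(n)}(y,0)=y$, and define $\theta^{(n+1)}(y,t):=\theta_0((\Phi^{(n)})^{-1}(y,t))$. Each iterate remains in $X_T$: compact support is preserved because $\Phi^{(n)}(\cdot,t)$ is a bi-Lipschitz homeomorphism of $\R^+$; the condition $\theta_y^{(n+1)}(0,t)=0$ follows from $\Phi^{(n)}(0,t)=0$ (forced by $u^{(n)}(0,t)=0$) together with $\theta_0'(0)=0$; and $C^2$ regularity is propagated by the smoothness of $\Phi^{(n)}$. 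A Gronwall argument combined with the Lipschitz estimate above will show that for $T>0$ sufficiently small, the iteration is a contraction, producing the desired solution; uniqueness follows similarly.

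The main obstacle is to maintain the $C^2$ regularity of $\theta^{(n)}$ uniformly in $n$. Differentiating $\theta^{(n+1)}$ twice introduces second derivatives of the inverse flow, which in turn require control on $u^{(n)}_{yy}$. Because $u^{(n)}_{yy}(y,t) = -\theta_y^{(n)}(y,t)/y$, keeping this bounded near the origin hinges on a uniform Lipschitz-type control of $\theta_y^{(n)}$ at $0$, i.e.\ preserving the vanishing of $\theta_y^{(n)}$ at the origin with a uniform modulus. Establishing this uniformly along the iteration, most likely via a careful bootstrap on the $C^2$ norm restricted to a fixed interval $[0,R]$ containing the supports of all iterates for small $T$, is the crux of the argument.
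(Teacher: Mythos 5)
Your overall framework (characteristics, $u(0,t)=0$ fixing the origin, pushing $\theta_0$ forward along the flow) is in the right spirit, but the fixed-point scheme as you set it up has a genuine gap: it loses a derivative and therefore does not close in $X_T=C([0,T],C_0^2(\R^+))$. Your own Lipschitz estimate already shows the problem: you control $\|u[\theta_1]-u[\theta_2]\|_{C^1}$ by $\|\theta_1-\theta_2\|_{C^2}$, i.e.\ the velocity difference is one derivative weaker than the input difference. To contract the map $\theta\mapsto\theta_0\circ\Phi^{-1}$ in the $C^2$ norm you must estimate $\|\Phi_1^{-1}-\Phi_2^{-1}\|_{C^2}$, and the equation for $\partial_y^2\Phi$ involves $u_{yy}(\Phi)$, so the difference estimate requires a modulus of continuity (in fact a Lipschitz bound) on $u_{yy}=-\theta_y(y)/y$, which amounts to control of roughly three derivatives of $\theta$. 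So the Gronwall argument you invoke cannot produce a contraction in $C([0,T],C^2)$ from the stated estimates; you correctly identify this as ``the crux'' but leave it unresolved. The standard repair would be to propagate uniform $C^2$ bounds along the iteration while contracting in a weaker norm (say $C([0,T],C^1)$ or $C^0$) and then pass to the limit, but that two-norm argument is precisely the missing step, not a routine afterthought.

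The paper avoids this issue entirely by closing the fixed point at the level of the flow map rather than of $\theta$: writing $\phi=\partial_x\Phi$ and $g=\theta_0\circ\Phi_0$, one obtains the self-contained equation \eqref{main_eq}, $\phi_t(x,t)=\phi(x,t)\int_x^\infty g'(z)/\Phi(z,t)\,dz$, in which the data $g$ is fixed once and for all and the nonlocal term involves only $\Phi$, which is an \emph{integral} of the unknown $\phi$; thus the right-hand side gains a derivative instead of losing one, and a contraction in $C([0,T],C^1[0,a])$ (Theorem \ref{locEx}) closes without any auxiliary bootstrap. The hypothesis $\d_y\te_0(0)=0$ enters only through $g'(0)=0$ (condition \eqref{cond_g}), which keeps the kernel $g'(z)/\Phi(z,t)$ bounded near $z=0$ once $\Phi(z,t)\gtrsim z$; this is the Lagrangian counterpart of your observation that $\theta_y(z)/z$ is bounded. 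Lemma \ref{backToTheta} then transfers the solution of \eqref{main_eq} back to a solution $\theta(y,t)=g(\Phi^{-1}(y,t))$ of \eqref{outerModel} with the required regularity. If you want to salvage your Eulerian iteration, you must either carry out the uniform-bound-plus-weak-norm-contraction argument explicitly, or reformulate as the paper does so that the derivative loss never appears.
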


\begin{theorem}\label{blowupThm}
Assume the initial data $\te_0$ is compactly supported, nonincreasing, nonnegative and is such that
\beq\nonumber
\d_{yy}\te_0(0) < 0.
\eeq
 Then there exists a finite time $T_s>0$ and constants
$\nu\in (0, 1), c > 0$ depending on the inital data 
such that $\theta(y, t) \in C^1( [0, T_s),\,C_0^2(\R^+))$ and
$$\te(y, T_s)=\lim_{t \to T_s} \te(y, t)$$ 
exists for all $y\in \R^+$ and 
\bea
\te(\cdot, T_s)\in C^2(0, \infty).
\eea
Moreover,
\beq\label{cuspBound}
\te(y, T_s) \leq \te_0(0)  - c |y|^{\nu} \quad (y\in \R^+).
\eeq 
\end{theorem}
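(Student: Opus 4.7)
I would work in Lagrangian coordinates. Let $Y(t,y_0)$ denote the characteristic from $y_0$, satisfying $\dot Y = u(Y,t)$ with $Y(0,y_0)=y_0$; then $\te(Y(t,y_0),t)=\te_0(y_0)$, and since $u(0,t)=0$ the origin is a fixed point of the flow, so $\te(0,t)\equiv \te_0(0)$. The assumed monotonicity and nonnegativity of $\te_0$ are preserved, giving $\te_y\le 0$ and hence $u_y\le 0$, $u\le 0$, and $u_{yy}(y,t)=-\te_y(y,t)/y\ge 0$. A Fubini swap in $u(y,t)=\int_0^y u_y(z,t)\,dz$ produces the key identity
\[
u(y,t)=\te(y,t)-\te_0(0)+y\,u_y(y,t),
\]
which along characteristics becomes
\[
\dot Y(t,y_0)=-A(y_0)+Y(t,y_0)\,u_y(Y(t,y_0),t),\qquad A(y_0):=\te_0(0)-\te_0(y_0)\ge 0,
\]
with both summands on the right nonpositive.

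For finite-time blowup I would use $\dot Y\le -A(y_0)<0$ for each $y_0>0$, so $Y(t,y_0)\le y_0-A(y_0)t$ would reach zero in time at most $y_0/A(y_0)$. Since $Y\equiv 0$ is invariant under a smooth flow, uniqueness of the characteristic ODE forbids this, and the solution must lose $C^2$ regularity at some finite time; Theorem \ref{thmWellposed} yields a maximal $T_s$. Passage to $T_s$ is Lagrangian: $Y(T_s,y_0):=\lim_{t\to T_s^-}Y(t,y_0)$ exists by monotonicity in $t$ and nonnegativity, and I set $\te(y,T_s):=\te_0(y_0(y))$ via the inverse Lagrangian map $y_0(y):=\inf\{y_0>0:Y(T_s,y_0)\ge y\}$. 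Pointwise convergence $\te(y,t)\to\te(y,T_s)$ and the $C^2$ regularity of $\te(\cdot,T_s)$ on $(0,\infty)$ follow from smoothness of the characteristic map away from the collapsed set $\{y_0:Y(T_s,y_0)=0\}$.

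The heart of the argument is the cusp bound \eqref{cuspBound}, equivalently $A(y_0(y))\ge c\,y^\nu$ for some $\nu\in(0,1)$. I would dichotomize on the collapsed set. In the \emph{needle} case, when $Y(T_s,y_0^*)=0$ for some $y_0^*>0$, monotonicity in $y_0$ forces $y_0(y)\ge y_0^*$ for all $y>0$, so $A(y_0(y))\ge A(y_0^*)>0$ uniformly, and \eqref{cuspBound} holds trivially for any $\nu\in(0,1)$ and $c$ sufficiently small. In the \emph{cusp} case, when $Y(T_s,y_0)>0$ for every $y_0>0$, the map $y_0\mapsto Y(T_s,y_0)$ is injective and one must quantify its contraction: the target is $Y(T_s,y_0)\le C\,y_0^\alpha$ for some $\alpha>2$, which yields $y_0(y)\ge c\,y^{1/\alpha}$, and via $A(y_0)\sim |\te_{0,yy}(0)|\,y_0^2/2$ near $0$ gives $A(y_0(y))\ge c'\,y^{2/\alpha}$ with $\nu:=2/\alpha\in(0,1)$.

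To obtain such superlinear contraction, the natural tool is the integrating factor $Y_{y_0}(T_s,y_0)=\exp\int_0^{T_s} u_y(Y(s,y_0),s)\,ds$: for small $y_0$, the path $Y(s,y_0)$ stays near $0$ and $u_y(Y(s,y_0),s)$ remains close to $u_y(0,s)$, so a sufficiently fast blowup of $-u_y(0,t)$ as $t\to T_s$ makes the exponential contract superlinearly in $y_0$. \textbf{The main obstacle} is the quantitative control: one must show that $\int_0^{T_s} u_y(Y(s,y_0),s)\,ds$ diverges to $-\infty$ at rate at least $\alpha\log(1/y_0)$ with $\alpha>2$ as $y_0\to 0$, which demands a sharp description of the blowup rate of $u_y(0,t)$ near $T_s$ and how it couples with the position $Y(s,y_0)$. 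Balancing this against the driving term $-A(y_0)\sim y_0^2$ in the right Lagrangian window is the technical core of the proof.
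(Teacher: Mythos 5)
Your Lagrangian setup, the identity $u(y,t)=\te(y,t)-\te_0(0)+y\,u_y(y,t)$, the finite-time blowup argument built on $\dot Y\le -A(y_0)$, and the needle/cusp dichotomy with target $Y(T_s,y_0)\le C y_0^{\alpha}$, $\alpha>2$, $\nu=2/\alpha$, are all sound and consistent with the actual structure of the result (your blowup argument is a legitimate alternative to the contradiction argument of Theorem \ref{thm:finite}, which instead tracks $\Phi(a,t)$). However, there is a genuine gap exactly where you flag ``the main obstacle'': the superlinear contraction of the flow map near the origin is never established, and the mechanism you sketch --- that $u_y(Y(s,y_0),s)$ stays close to $u_y(0,s)$ and that $-u_y(0,t)$ blows up fast enough for the integrating factor $\exp\int_0^{T_s}u_y(Y(s,y_0),s)\,ds$ to decay like $y_0^{\alpha-1}$ --- is precisely the comparison that degenerates as $t\to T_s$. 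Extracting a sharp blowup rate of $u_y(0,t)$ and coupling it to the trajectory is not obviously easier than the original problem, so the cusp-case half of \eqref{cuspBound} remains unproven in your proposal.

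The paper closes this step not by estimating $u_y(0,t)$ but by a bootstrap (barrier) argument on the stretching $\phi=\Phi_x$. With $\eps(t)=\phi(0,t)$ and $l(t)=\eps(t)^{\beta}$, the barrier \eqref{eq:boot}, $\phi(x,t)\le\ka\eps(t)$ on $[0,l(t)]$, is propagated by combining an upper bound on the relative stretching $\eta=\phi/\eps$ in the inner region (controlled by $\int_0^t l(s)\eps(s)^{-1}\,ds$, Lemma \ref{lem:boundInnerZone}) with the lower bound $-\eps_t\ge c\,\eps^{\beta}$ coming from the Biot--Savart structure (Lemma \ref{lem:lowerBoundEps}); a continuous-induction argument (Lemma \ref{lemma_5}) closes the loop after choosing $\ka>2$, $\beta$ close to $1$ and $\eps_0$ small. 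The global-in-space bound you need then follows almost for free: since $\phi(x,\cdot)$ is nonincreasing in $t$, evaluating the barrier at the first time $t_x$ with $l(t_x)=x$ gives $\phi(x,t)\le\ka x^{1/\beta}$, hence $\Phi(z,T_s)\le C z^{1+1/\beta}$, which is exactly your $Y(T_s,y_0)\le C y_0^{\alpha}$ with $\alpha=1+1/\beta>2$ and $\nu=2\beta/(\beta+1)$. A smaller point: your claim that $\te(\cdot,T_s)\in C^2(0,\infty)$ ``follows from smoothness away from the collapsed set'' needs a uniform-in-$t$ estimate up to $T_s$ (as in Lemma \ref{reg1}) rather than smoothness at each $t<T_s$, though this is minor compared with the missing contraction estimate.
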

The singularity formed at time $T_s$ is at least a cusp, but can potentially also be ``needle-like" (see Figure \ref{f1}). Needle formation arises when
$$
\lim_{y\to 0+} \te(y, T_s) < \te_0(0).
$$
To the best of our knowledge, 
this is the first time singularity formation of the type described is rigourously established for a nonlocal active
scalar transport equation.

\begin{figure}[htbp]
\begin{center}
\includegraphics[scale=0.6]{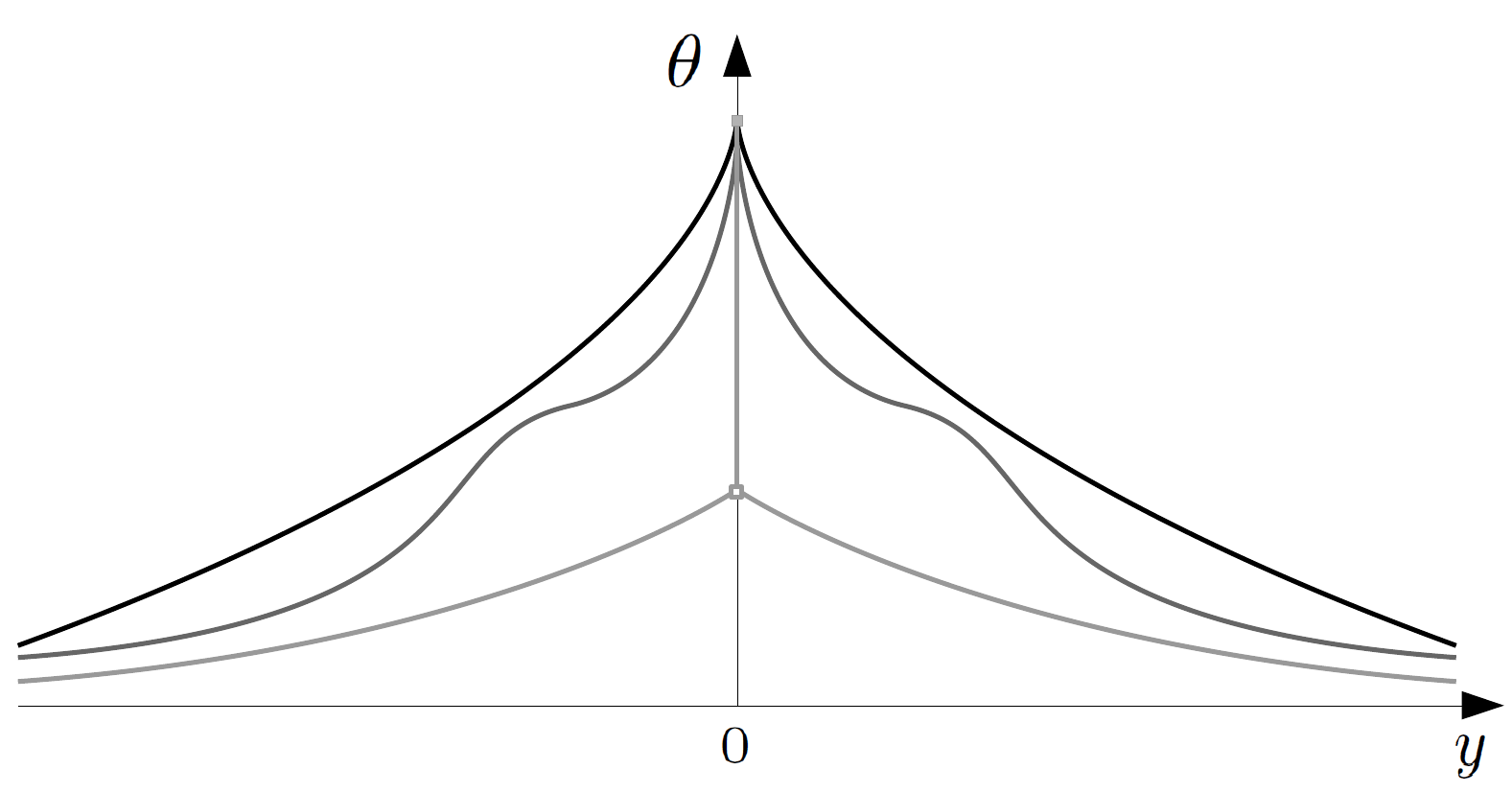}%
\end{center}
\caption{Illustration of the upper bound (black), $\te(\cdot, T_s)$ at singular time with cusp (grey) and a scenario with ``needle formation" (light grey). Note that all three have the same value at $y=0$.
 }\label{f1}
\end{figure}

\section{Proofs}
\subsection{An equation for the flow map.}
Our approach is characterized by working in Langrangian coordinates and exploiting the properties of the flow map.
From now on, $x$ denotes the Lagrangian space variable. The flow map $\Phi$ associated to \eqref{outerModel} satisfies
\beq\label{eq_flowmap}
\Phi_t(x, t) = u(\Phi(x, t), t).
\eeq
Note that because of $u(0,t)=0$, $\Phi(0,t)=0$ holds.
The basic equation for the stretching (derivative of the flow map $\phi = \partial_{x}\Phi$) follows from differentiating \eqref{eq_flowmap}: 
\beq\label{basic_Stretch}
\phi_t(x, t) = u_y(\Phi(x, t), t) \phi(x, t).
\eeq
With some nonnegative, compactly supported, smooth $g: [0, \infty)\to \R$, $g\not \equiv 0$, one can make an ansatz for the solution $\te$ by setting
\beq\label{def_te}
\theta(y, t) = g(\Phi^{-1}(y, t)).
\eeq

The following relation holds: 
\beq\label{rel:basic}
\te_y(y, t) = g'(\Phi^{-1}(y, t))/\phi(\Phi^{-1}(y, t)).
\eeq
Computing $u_y(\Phi(x, t), t)$ using \eqref{cutOff}, \eqref{rel:basic} and the substitution $z = \Phi^{-1}(y, t)$ gives
\beq\label{rel_u}
u_y(\Phi(x, t), t) = \int_{\Phi(x, t)}^\infty \frac{g'(\Phi^{-1}(y, t))}{y ~\phi(\Phi^{-1}(y, t))}\, dy = \int_x^{\infty}  \frac{g'(z)}{\Phi(z,t)}\, dz.
\eeq
Combining this with \eqref{basic_Stretch}, we obtain the following central evolution equation for $\phi$:
\begin{align}\label{main_eq}
\begin{split}
&\phi_t(x, t) = \phi(x, t) \int_x^{\infty} \frac{g'(z) ~dz}{\Phi(z, t)}, \qquad\Phi(z, t) = \int_0^z \phi(\si, t)\, d\si.\\
&\phi(x, 0) = \phi_0(x)
\end{split}
\end{align}
Here, $(\phi_0, g)$ are given functions with $(\phi_0, g)\in C^1(\R^+)\times C^2_0(\R^+)$ having the properties
\beq\label{cond_g}
\inf_{\R^+} \phi_0 > 0, ~~ g'(0) = 0. 
\eeq
We shall consider solutions $\phi\in C^1(\R^+\times[0, T])$ with the additional property 
\beq\label{eq_inf}
\inf_{(x, t)\in \R^+\times [0, T]} \phi(x, t) > 0.
\eeq

The following Lemma clarifies the relation between $\te$ and $(\phi_0, g)$:

\begin{lemma}\label{backToTheta}
Let $\te_0 \in C^2_0(\R^+)$ with $\partial_y \te_0(0) = 0$ be given.
Suppose $$\phi\in C^1(\R^+\times[0, T])$$ is a solution of \eqref{main_eq} with given $(\phi_0, g)$ satisfying 
\eqref{cond_g} and
\beq\label{choice_g}
\te_0(y) = g(\Phi_0^{-1}(y)) \quad (y\in \R^+), ~~\text{where }\Phi_0(x) = \int_0^x \phi_0(\si)\,d\si. 
\eeq
Then $\te(y, t) = g(\Phi^{-1}(y,t))\in C( [0, T),\,C_0^2(\R^+))\cap C^1([0, T), \, C^1(\R^+))$ is a solution of \eqref{outerModel} as long as \eqref{eq_inf} holds.
\end{lemma}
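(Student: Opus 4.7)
The strategy is a direct verification: with the ansatz $\te(y, t) = g(\Phi^{-1}(y, t))$, the evolution equation \eqref{main_eq} for $\phi$ already encodes both the transport equation and the Biot-Savart law of \eqref{outerModel}, so the task reduces to unpacking these. I would proceed in four steps. First, set up the flow map. The uniform lower bound \eqref{eq_inf} combined with $\phi\in C^1(\R^+\times [0,T])$ implies that $\Phi(\cdot, t)$ is a strictly increasing $C^1$-diffeomorphism of $\R^+$ onto itself (the bound $\Phi(x, t)\geq (\inf \phi)\,x$ forces $\Phi\to\infty$). By the inverse function theorem, $\Phi^{-1}(\cdot, t)\in C^1$ in $y$; since $\Phi_x=\phi$ is itself $C^1$ in $x$, we upgrade to $\Phi^{-1}(\cdot, t)\in C^2$ in $y$ at each fixed $t$, and joint continuity in $(y,t)$ is standard.

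Second, define $u(y, t) := \Phi_t(\Phi^{-1}(y, t), t)$. Because $\Phi(0, t)\equiv 0$ we have $\Phi_t(0, t)= \int_0^0 \phi_t\, d\si = 0$, so $u(0, t)=0$ automatically. Differentiating the flow relation $\Phi_t(x,t) = u(\Phi(x,t), t)$ in $x$ yields $\phi_t = u_y(\Phi, t)\,\phi$, so by \eqref{main_eq}
\bea
u_y(\Phi(x, t), t) \,=\, \frac{\phi_t(x, t)}{\phi(x, t)} \,=\, \int_x^\infty \frac{g'(z)}{\Phi(z, t)}\, dz.
\eea
On the Eulerian side, $\te_y(y, t)= g'(\Phi^{-1}(y,t))/\phi(\Phi^{-1}(y,t), t)$; the substitution $z = \Phi^{-1}(y', t)$, $dy' = \phi(z, t)\, dz$ in the outer integral gives
\bea
\int_y^\infty \frac{\te_y(y', t)}{y'}\, dy' \,=\, \int_{\Phi^{-1}(y, t)}^\infty \frac{g'(z)}{\Phi(z, t)}\, dz.
\eea
Setting $y=\Phi(x, t)$ identifies both right-hand sides and establishes the Biot-Savart law in \eqref{outerModel}.

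Third, verify the transport equation and the initial data. Implicit differentiation of $\Phi(\Phi^{-1}(y, t), t)=y$ in $t$ gives $\d_t\Phi^{-1}(y,t) = -\Phi_t/\phi\big|_{\Phi^{-1}(y, t)} = -u(y, t)/\phi(\Phi^{-1}(y, t), t)$, hence
\bea
\te_t(y,t) \,=\, g'(\Phi^{-1}(y,t))\cdot \d_t\Phi^{-1}(y,t) \,=\, -u(y, t)\,\te_y(y, t).
\eea
Condition \eqref{choice_g} yields $\te(y, 0) = \te_0(y)$; compact support of $\te(\cdot, t)$ in $\R^+$ is inherited from the support of $g$ since $\Phi^{-1}(\cdot, t)$ sends compact sets to compact sets; the claimed regularity $\te \in C([0, T), C_0^2(\R^+))\cap C^1([0, T), C^1(\R^+))$ then follows by routine composition, using that $g\in C^2_0$ and $\phi\in C^1$.

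The only substantive algebraic point is the change of variable in step two: the Jacobian $\phi$ from $dy' = \phi\, dz$ cancels the factor $\phi$ in the denominator of $\te_y$, and this cancellation is the precise reason that the Lagrangian integral appearing in \eqref{main_eq} coincides with the Eulerian Biot-Savart integral. I also note that the hypothesis $g'(0)=0$ in \eqref{cond_g} propagates to $\d_y \te(0, t) = g'(0)/\phi(0, t) = 0$ for all $t$, which is the boundary compatibility required by the solution class of \eqref{outerModel}.
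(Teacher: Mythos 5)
Your proposal is correct and is essentially the paper's own argument: the heart of both is the change-of-variables identity \eqref{rel_u} relating $\int_x^\infty g'(z)/\Phi(z,t)\,dz$ to the Eulerian Biot--Savart integral, together with $\Phi(0,t)=0$ and a chain-rule verification of the transport equation, with the regularity and initial condition handled by routine composition as in the paper. The only (cosmetic) difference is directional: you define $u:=\Phi_t\circ\Phi^{-1}$ and verify it satisfies \eqref{cutOff}, while the paper defines $u$ by \eqref{cutOff} and then integrates $\partial_t\partial_x\Phi=\partial_x\Phi\,u_y(\Phi,t)$ in $x$ to recover $\Phi_t=u(\Phi,t)$ --- the same computation read in reverse.
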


\begin{proof}
Observe first that the definition of $g$ in \eqref{choice_g} implies $g'(0) = 0$ via the assumption $\partial_y \te_0(0) = 0$. 
Let $\te(y, t)$ be defined by  $\te(y, t) = g(\Phi^{-1}(y,t))$ (note that $\Phi^{-1}(\cdot, t)$ is well-defined for
all $t\in [0, T]$ because of \eqref{eq_inf}). A calculation (see \eqref{rel_u}) shows that the integral
$$
\int_x^{\infty}  \frac{g'(z)}{\Phi(z,t)}\, dz
$$
equals $u_y(\Phi(x, t), t)$, where $u$ is the velocity field \eqref{cutOff}. Using \eqref{main_eq}, this implies
$$
\partial_{t}\partial_{x}\Phi(x, t) = \partial_{x}\Phi(x, t) u_y(\Phi(x, t), t).
$$
Taking into account $\Phi(0, t) = 0$ and integrating with respect to $x$, we obtain $\partial_t \Phi(x, t) = u(\Phi(x, t), t)$. By differentiating 
$$\te(\Phi(x,t),t)=g(x)$$
in time we see that $\te$ satisfies equation \eqref{outerModel}. The initial condition $\te(y, 0) = \te_0(y)$ follows
from \eqref{choice_g}. Finally, the required regularity for $\te$ is also straightforward to verify.  
To check for instance $\te(\cdot, t)\in C^2(\R^+)$ we observe first
that \eqref{rel:basic} implies that $\te_y$ is continuous on $\R^+$. Differentiating $\eqref{rel:basic}$, we obtain
$$
\te_{yy}(y, t) = \frac{g''(\Phi^{-1}(y, t))- g'(\Phi^{-1}(y, t)) \phi'(\Phi^{-1}(y, t), t) \phi(\Phi^{-1}(y, t), t)^{-1}}{\phi({\Phi^{-1}(y, t)}, t)^2}
$$
and thus $\te_{yy}\in C(\R^+)$. 
\end{proof}

\begin{figure}[htbp]
\begin{center}
\includegraphics[scale=0.6]{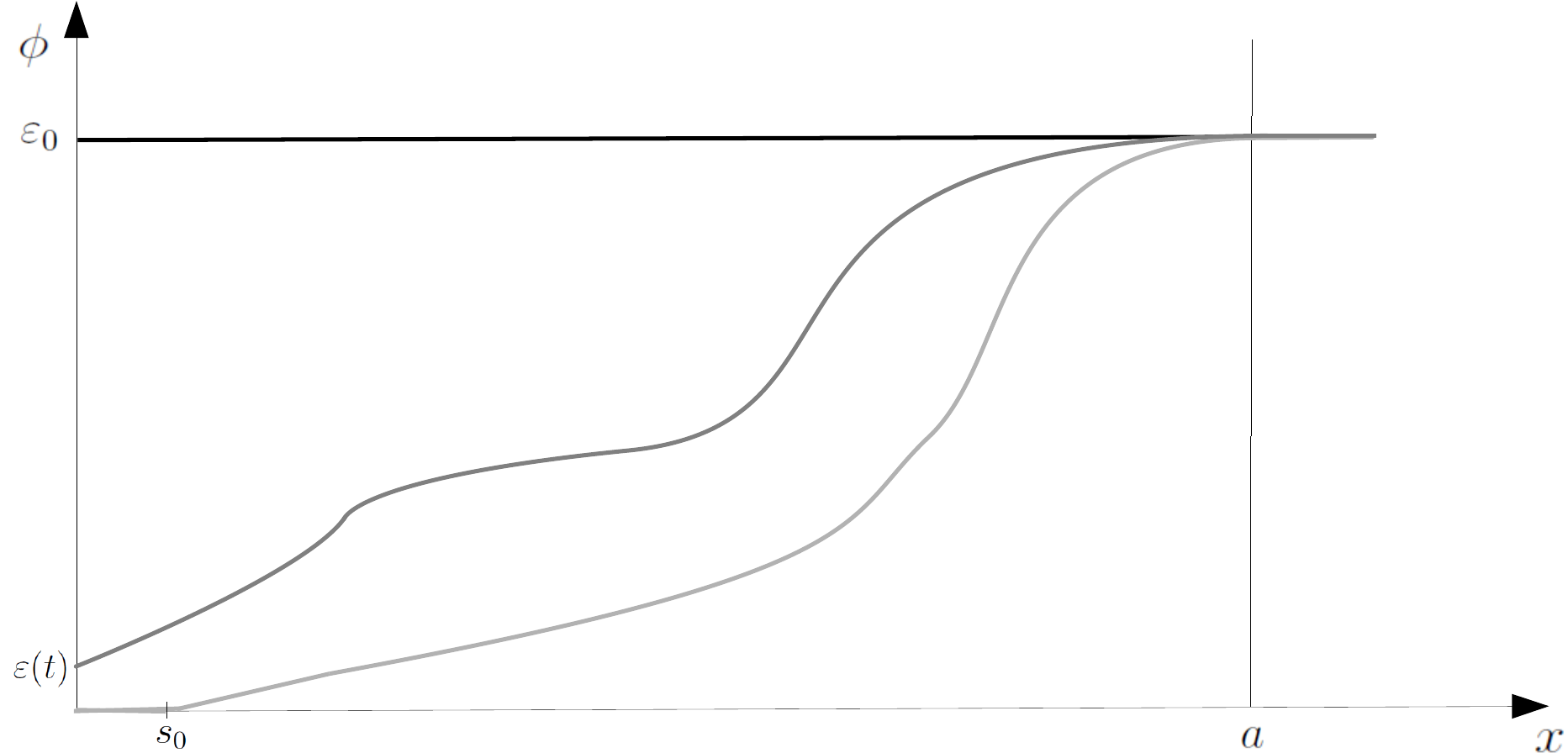}%
\end{center}
\caption{Illustration of $\phi$.
 }\label{f2}
\end{figure}

\begin{remark}\label{expl}
A fairly clear picture of the blowup mechanism emerges by visualizing the solution of \eqref{main_eq}.
Later, we will choose $\phi_0$ to be a positive constant and adjust $g$ so that we obtain solutions of \eqref{outerModel}
by using Lemma \ref{backToTheta}. $\phi(x, t)$ is monotone nonincreasing in $x$ and is depicted in Figure \ref{f2} for times $t>0$.
The blowup happens when $\phi(0, t)$ reaches zero in finite time. The intuitive reason is that at the singular time $T_s$, the odd continuation of the
inverse $\Phi^{-1}(\cdot,T_s)$ to $(-\infty,\infty)$ is not differentiable in $x=0$. The main driving mechanism for blowup in finite time is the behavior of the
following integral: 
$$
-\int_0^\infty \frac{g'(z) ~dz}{\Phi(z, t)}.
$$ 
We shall show that it will be at least growing like $\phi(0, t)^{-1+\beta}$ with some $\beta\in (0, 1)$, thus implying
$\phi_t(0, t) \leq - C \phi(0, t)^{\beta}$. 
\end{remark}

The following Theorem shows that nontrivial solutions of \eqref{main_eq} cannot be
defined on an infinite time interval. We present this Theorem for illustrative purposes, to show that finite-time blowup via
a contradiction argument can easily be obtained for \eqref{main_eq}. The task of characterizing
the singular shape is, however, much more challenging (see section \ref{sec_proof_blowupThm} below).

\begin{theorem}\label{thm:finite}
Suppose \eqref{cond_g} holds for $(\phi_0, g)$ and that $g(0)> 0$.
 There can be no solution of \eqref{main_eq} $\phi\in C^1(\R^+\times[0, \infty))$ satisfying 
\eqref{eq_inf}.
\end{theorem}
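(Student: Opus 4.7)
My plan is to contradict \eqref{eq_inf} by showing that the quantity $\Phi(M,t)$, where $M>0$ is chosen so that $\operatorname{supp} g \subset [0,M]$, decreases \emph{linearly} in time at the explicit rate $-g(0)$. Since \eqref{eq_inf} forces $\Phi(M,t) \geq c_0 M > 0$ for some $c_0>0$, such a linear decrease is incompatible with global existence.

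The derivation of the identity $\tfrac{d}{dt}\Phi(M,t) = -g(0)$ will proceed as follows. Differentiating under the integral and using \eqref{main_eq},
$$\frac{d}{dt}\Phi(M,t) = \int_0^M \phi_t(\sigma,t)\,d\sigma = \int_0^M \phi(\sigma,t)\int_\sigma^M \frac{g'(z)}{\Phi(z,t)}\,dz\,d\sigma,$$
where I have truncated the inner integral at $M$ using $g'\equiv 0$ on $[M,\infty)$. The critical step is to swap the order of integration; this is legitimate because \eqref{eq_inf} makes the integrand bounded on the triangular region. The inner integral over $\sigma\in[0,z]$ then produces exactly $\int_0^z \phi(\sigma,t)\,d\sigma = \Phi(z,t)$, which \emph{cancels} the denominator. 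We land on
$$\frac{d}{dt}\Phi(M,t) = \int_0^M g'(z)\,dz = g(M) - g(0) = -g(0),$$
using $g(M)=0$ by compact support and $g(0)>0$ by assumption.

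Integrating in $t$ yields $\Phi(M,t) = \Phi(M,0) - g(0)\,t$, which becomes negative once $t > \Phi(M,0)/g(0)$---in direct contradiction with the bound $\Phi(M,t)\geq c_0 M > 0$ guaranteed by \eqref{eq_inf}. I do not foresee any real obstacle; once one spots the Fubini swap and the exact cancellation with $\Phi(z,t)$, the rest is automatic. The reason the argument works here, and presumably will not suffice for the finer Theorem \ref{blowupThm}, is precisely that averaging against $\phi$ over $[0,M]$ strips the nonlocal factor $\Phi(z,t)$ from the denominator: the resulting identity exposes only aggregate decay and yields no pointwise information about $\phi(0,t)$ or the shape of the solution near the origin.
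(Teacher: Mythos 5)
Your argument is correct and is essentially the paper's own proof: differentiate $\Phi$ at the right endpoint of $\operatorname{supp} g$, swap the order of integration so that $\Phi(z,t)$ cancels, obtain the constant rate $-g(0)$, and contradict positivity of $\Phi$ for large times. (The only cosmetic difference is that the paper contradicts $\Phi(M,t)\geq 0$ directly, which is all that is needed, rather than a uniform-in-time lower bound $c_0 M$.)
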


\begin{proof}
For the purpose of deriving a contradiction, let us assume that $\phi(\cdot, t)$ is defined for $t\in [0, \infty)$. \eqref{eq_inf}
implies $\Phi(x, t) \geq 0$ for all times. Let $[0, a] = \operatorname{supp} g$. By taking the derivative of 
$\Phi(a, t) = \int_0^a \phi(\si, t)\, d\si$ with respect to $t$ and using \eqref{main_eq} we get
$$
\Phi_t(a, t) = \int_0^a \int_{\si}^{a} \phi(\si, t)\frac{g'(z) }{\Phi(z, t)} ~dz d\si.
$$
The integral on the right-hand side can be written as 
\bea
\int_{0}^a\int_0^z\phi(\si, t)\frac{g'(z) }{\Phi(z, t)}\, d\si dz = \int_0^a \Phi(z, t) \frac{g'(z) }{\Phi(z, t)}\, dz = g(a)-g(0).
\eea
Because of $g(a) = 0$, $\Phi_t(a, t) = - g(0)<0$, giving a contradiction for large times. 
\end{proof}
\subsection{Local existence}

\begin{theorem}[Local existence and uniqueness] \label{locEx}
The problem \eqref{main_eq} has a unique local solution $\phi\in C^1(\R^+\times [0,T])$ for some $T>0$, 
provided that $\phi_0\in C^1(\R^+)$, $g\in C_0^2(\R^+)$ and \eqref{cond_g} holds for $(\phi_0,g)$. Moreover, $$\inf_{(x, t)\in \R^+\times [0, T]} \phi(x, t) > 0.$$
\end{theorem}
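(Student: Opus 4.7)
The plan is to recast \eqref{main_eq} as a fixed-point equation. Since the ODE $\phi_t = \phi\, V$ integrates in $t$ at each fixed $x$, solutions of \eqref{main_eq} correspond to fixed points of
\[
\mathcal{T}[\phi](x,t) := \phi_0(x)\exp\Bigl(\int_0^t V[\phi](x,s)\, ds\Bigr),\qquad V[\phi](x,s) := \int_x^\infty \frac{g'(z)}{\Phi[\phi](z,s)}\,dz,
\]
with $\Phi[\phi](z,s) = \int_0^z \phi(\si,s)\,d\si$. Let $m_0 := \inf_{\R^+}\phi_0 > 0$, pick $a$ with $\operatorname{supp} g \subset [0,a]$, and note that $g\in C^2_0$ together with $g'(0)=0$ produces a constant $K$ such that $|g'(z)|\leq Kz$ on $[0,a]$. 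I would apply the Banach contraction principle on a closed ball of strictly positive continuous functions for a short time.

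Since $V[\phi](x,s)\equiv 0$ for $x\geq a$, the fixed-point problem effectively lives on $[0,a]\times[0,T]$ (outside this range $\mathcal{T}$ just returns $\phi_0$). For small $T>0$ I would consider
\[
X_T := \{\phi \in C([0,a]\times[0,T]): \tfrac{1}{2}\phi_0(x) \leq \phi(x,t) \leq 2\phi_0(x)\}
\]
equipped with the sup norm. Any $\phi\in X_T$ satisfies $\Phi[\phi](z,s)\geq (m_0/2)z$, so $|g'(z)/\Phi[\phi](z,s)|\leq 2K/m_0$ on $[0,a]$ and $|V[\phi]|\leq 2Ka/m_0$ uniformly; thus $\mathcal{T}[\phi]\in X_T$ as soon as $\exp(2KaT/m_0)\leq 2$. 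For the contraction step, the elementary bound
\[
\Bigl|\frac{1}{\Phi[\phi_1](z,s)} - \frac{1}{\Phi[\phi_2](z,s)}\Bigr| \leq \frac{|\Phi[\phi_1]-\Phi[\phi_2]|}{\Phi[\phi_1]\,\Phi[\phi_2]} \leq \frac{4\|\phi_1-\phi_2\|_\infty}{m_0^2\, z}
\]
combined with $|g'(z)|\leq Kz$ absorbs the apparent $1/z$ singularity and gives $\|V[\phi_1]-V[\phi_2]\|_\infty \leq (4Ka/m_0^2)\|\phi_1-\phi_2\|_\infty$. Inserting this into $|e^u-e^v|\leq e^{\max(u,v)}|u-v|$ and shrinking $T$ if necessary yields a genuine contraction, and Banach's theorem produces a unique fixed point $\phi\in X_T$.

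With the fixed point in hand, the lower bound $\inf\phi\geq m_0/2$ required in \eqref{eq_inf} is built into the definition of $X_T$. Differentiating the fixed-point formula in $t$ recovers \eqref{main_eq}; differentiation in $x$ brings out the boundary term $-g'(x)/\Phi[\phi](x,s)$, and because $g'(x) = g''(0)x + o(x)$ and $\Phi[\phi](x,s) = \phi(0,s)\,x + o(x)$, this term extends continuously to $x=0$. Combined with $\phi_0\in C^1$ this promotes the fixed point to $C^1(\R^+\times[0,T])$. Uniqueness in the $C^1$ class follows because any such solution of \eqref{main_eq} automatically satisfies $\mathcal{T}[\phi]=\phi$ by integrating $\phi_t/\phi$ in time. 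The key subtlety, and the place where the hypotheses in \eqref{cond_g} become sharp, is the cancellation $|g'(z)/\Phi(z,s)|\lesssim 1$ near $z=0$ that keeps $V[\phi]$ bounded and makes the iteration converge; dropping either $g'(0)=0$ or $\inf\phi_0>0$ would introduce a nonintegrable singularity at $z=0$ and collapse the scheme.
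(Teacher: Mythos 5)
Your argument is correct, and it follows the same basic strategy as the paper -- localize to $[0,a]=\operatorname{supp}g'$, use $|g'(z)|\le Kz$ together with $\Phi(z,s)\ge c\,z$ (the cancellation coming from \eqref{cond_g}) to control the kernel $g'/\Phi$, and run a Banach fixed-point argument on a short time interval -- but the implementation differs in two respects worth noting. The paper contracts the \emph{additive} integral operator of \eqref{int_eq}, $\phi\mapsto\phi_0+\int_0^t\phi\,V[\phi]\,ds$, in the space $C([0,T],C^1[0,a])$ (a $C^1$-ball of radius $\mu=\tfrac14\min\phi_0$ around $\phi_0$), so the $C^1(\R^+\times[0,T])$ regularity and the lower bound on $\phi$ come directly out of the contraction space. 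You instead contract the \emph{multiplicative} form $\phi\mapsto\phi_0\exp\bigl(\int_0^t V[\phi]\,ds\bigr)$ in the plain sup norm on a pointwise band $\tfrac12\phi_0\le\phi\le2\phi_0$; this buys automatic positivity and a weaker topology for the iteration, and the $C^1$ bootstrap then works precisely because at a fixed point the right-hand side sees $\phi$ only through $\Phi[\phi]$ and $V[\phi]$, which gain an $x$-derivative, with $\partial_xV[\phi]=-g'(x)/\Phi(x,s)$ extending continuously to $x=0$ exactly as you say. (With the paper's additive operator this $C^0$-then-bootstrap shortcut would not close, since $\partial_x$ of that operator reintroduces $\partial_x\phi$; conversely, the paper's setup does not rely on the special structure $\phi_t=\phi V$.) Two small points you should make explicit at your level of detail: the contraction constant also involves $\sup_{[0,a]}\phi_0$ (harmless, since $\phi_0$ need only be bounded on the compact set $[0,a]$), and uniqueness in the full $C^1$ class requires the one-line remark that any $C^1$ solution with the positivity needed for \eqref{main_eq} to make sense stays in your band on a short time interval (and equals $\phi_0$ identically for $x\ge a$), after which the fixed-point uniqueness and a continuation argument apply; the paper leaves the analogous details implicit as well.
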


\begin{proof} The proof is standard, so we just sketch a few details. Let $\operatorname{supp} g' = [0, a]$.
Rewrite \eqref{main_eq} as an integral equation by integrating \eqref{main_eq} in time
\begin{align}\label{int_eq}
\phi(x,t) & = \phi_0(x)+\int_0^t \phi(x,s)\,\left(\int_x^\infty\frac{g'(z)}{\Phi(z,s)}~dz\right)~ds =:\phi_0(x)+\cT[\phi](x,t).
\end{align}
Given $(g,\phi_0)$ let the set $X_{T,\mu}$ be the set of functions $\phi$ defined by the following conditions:
\begin{align}\nonumber
\phi \in C\left([0,T], C^1(\R^+)\right),~\|\phi-\phi_0\|_{C\left([0,T],C^1[0, a]\right)}\le \mu,~~\phi(x, t) = \phi_0(x)~\text{for}~x\geq a 
\end{align}
where $T,\mu>0$ are to be chosen below. The norm $\|\cdot\|_{C\left([0,T], C^1[0, a]\right)}$
is given by
$$
\max_{t\in [0, T]}\left(\|f(\cdot, t)\|_\infty+\|f_x(\cdot, t)\|_{\infty}\right),
$$
$\|\cdot\|_{\infty}$ denoting the supremum norm on $[0, a]$.

First we need to show that the operator $\phi_0+\cT$ is well-defined.
For $\phi \in X_{T,\mu}$ we have the following estimate:
\begin{align*}
\Phi(z,t) & = \int_0^z \phi(\si,t)~d\si 
\ge -\left|\int_0^z (\phi(\si,t) -\phi_0(\si))~d\si\right|+\int_0^z \phi_0(\si)~d\si\\
& \ge -\mu z+(\displaystyle\min_{\substack{\R^+}}\phi_0)z 
=\left(\displaystyle\min_{\substack{[0,a]}}\phi_0-\mu\right)z.
\end{align*}
So if $\mu=\frac{1}{4}\displaystyle\min_{\substack{\R^+}}\phi_0$, then we have
$$
\frac{g'(z)}{\Phi(z,t)} \leq C(g, \phi_0) \quad (0\leq z \leq a)
$$ 
because of $g'(0) = 0$, i.e. $g'(z)\le \text{const.}z$. Using $\|\d_x\phi\|_\infty\le \mu + \|\d_x\phi_0\|_\infty$, $\|\phi\|_\infty\le \mu+ \|\phi_0\|_\infty$ and 
\eqref{eq_inf} we obtain
\bea
|\cT[\phi](x, t)| \leq  C(g,\phi_0) T,  ~~|\partial_x \cT[\phi](x, t)| \leq  C(g,\phi_0) T,
\eea
and $\cT[\phi](x, t) = 0$ for all $x\geq a, t\in [0, T]$.
Hence $\phi\mapsto\phi_0+\cT[\phi]$ maps $\phi\in X_{T,\mu}$ and into $X_{T, \mu}$ for sufficiently
small $T>0$. A straightforward, but tedious calculation shows the inequality
\begin{align*}
\|\cT[\phi]-\cT[\psi]\|_{C([0,T], C^1[0, a])} & \le C(\mu, g, a, \phi_0) T \|\phi-\psi\|_{C([0,T], C^1[0, a])}.
\end{align*}
The proof is concluded by applying the Contraction Mapping Theorem to the operator $$\phi_0+\cT,$$ choosing $T>0$ to be sufficiently
small to ensure the contraction property. Note that $\phi\in X_{T, \mu}$ satisfying the equation \eqref{int_eq} lies
automatically in $C^1(\R^+\times[0, T])$.
\end{proof}

The proof of Theorem \ref{thmWellposed} follows now from Theorem \ref{locEx}, by taking the local solution
of \eqref{main_eq} and defining $\te$ via \eqref{def_te}. More precisely, we may take e.g. $\phi_0 = 1, 
g(x) = \te_0$.

\subsection{Proof of Theorem \ref{blowupThm}.}\label{sec_proof_blowupThm}

\emph{Setup for the proof and preparatory lemmas.} For convenience, we set
\beq \nonumber
\phi_0(x) = \eps_0>0,
\eeq 
with small $0<\eps_0<1$ to be chosen later, instead of starting with $\phi_0(x) \equiv 1$ (see also Remark \ref{why_eps_0}).
To obtain the given initial condition $\te_0$ we set 
\beq\label{def_g}
g(z; \eps_0) := \te_0(\eps_0 z) \in C^2(\R^+).
\eeq
Note that \eqref{def_g} ensures that $\te$ defined by $\te(y, t) = g(\Phi^{-1}(y,t);\eps_0)$ satisfies the initial
condition $\te(\cdot, 0) = \te_0$ for any $\eps_0>0$. Observe carefully that variation of $\eps_0$ does not correspond to
a rescaling of the initial data for the equation \eqref{outerModel}.

Now fix some constants $0<K_0<K_1$ such that
\beq\label{eq_K1}
\eps_0^2 K_0 z \leq -g'(z; \eps_0) \leq \eps_0^2 K_1 z \quad (0 \leq z \leq 	1)
\eeq
To see that such constants exist, observe that $g''(z; \eps_0) = \eps_0^2 \d_{yy}\te_0(\eps_0 z)$ and $g'(0; \eps_0) = 0$;
just take $K_0$ to be slightly lower and $K_1$ to be slightly larger than $-\d_{yy}\te_0(0)> 0$ and choose $\eps_0$ sufficiently small so that 
\eqref{eq_K1} holds for $0 \leq z \leq 1$. In the following, we write $g(z; \eps_0) = g(z)$ for convenience.


We write 
\beq
\eps(t) = \phi(0, t).
\eeq
By assumption, $\te_y(z, 0) < 0$, so that $g' < 0$. 
Note that \eqref{main_eq} implies $\eps_t(t) < 0$ as long as the smooth solution can be continued.

\begin{lemma}\label{lem_eq_eta}
The following equation holds for $\eta(x, t) = \phi(x, t)/\eps(t)$:
\beq\label{eq_eta}
\eta_t(x, t) = -\eta(x,t) \int_0^x \frac{g'(z)\, dz}{\Phi(z, t)}.
\eeq
Moreover, $\phi(x, t)$ is monotone nondecreasing in $x$ for fixed $t$.
\end{lemma}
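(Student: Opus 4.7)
The plan is to derive \eqref{eq_eta} by a direct quotient-rule computation, and then to deduce the monotonicity from a log-derivative observation, both relying only on the central equation \eqref{main_eq}.

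For the evolution equation for $\eta$, I would first evaluate \eqref{main_eq} at $x=0$ to obtain
$$\eps_t(t) = \eps(t)\int_0^\infty \frac{g'(z)\,dz}{\Phi(z,t)},$$
noting that the integrand is bounded near $z=0$ because $g'(0)=0$ gives $g'(z)=O(z)$ while $\Phi(z,t)\sim \eps(t) z$, and it has compact support in $z$, so $\eps_t$ is well-defined as long as \eqref{eq_inf} holds. Then writing $\eta=\phi/\eps$ and taking the logarithmic derivative,
$$\frac{\eta_t}{\eta} \;=\; \frac{\phi_t}{\phi}-\frac{\eps_t}{\eps} \;=\; \int_x^\infty\frac{g'(z)\,dz}{\Phi(z,t)} - \int_0^\infty\frac{g'(z)\,dz}{\Phi(z,t)} \;=\; -\int_0^x\frac{g'(z)\,dz}{\Phi(z,t)},$$
which is exactly \eqref{eq_eta}.

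For the monotonicity of $\phi(\cdot,t)$, the key observation is that
$$\partial_x\Bigl(\frac{\phi_t(x,t)}{\phi(x,t)}\Bigr) \;=\; -\frac{g'(x)}{\Phi(x,t)} \;\ge\; 0,$$
since $g'\le 0$ (because $\te_0$ is assumed nonincreasing, so $g'(z)=\eps_0\,\te_0'(\eps_0 z)\le 0$) and $\Phi\ge 0$ by \eqref{eq_inf}. Because the ansatz $\phi_0\equiv \eps_0$ is constant in $x$, $\log\phi(\cdot,0)$ is constant; integrating the logarithmic derivative of \eqref{main_eq} in time then gives, for $x\ge y$,
$$\log\phi(x,t)-\log\phi(y,t) \;=\; -\int_0^t\int_y^x\frac{g'(z)\,dz}{\Phi(z,s)}\,ds \;\ge\; 0,$$
so $\phi(x,t)\ge \phi(y,t)$ after exponentiating.

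This lemma is essentially a preparatory calculational step and I do not anticipate any substantive obstacle. The only care needed is the integrability of $g'/\Phi$ near $z=0$, which rests on $g'(0)=0$ together with the nondegeneracy $\eps(t)>0$, and the monotonicity relies crucially on the constant-in-$x$ choice $\phi_0\equiv \eps_0$ made in the proof setup just above the lemma.
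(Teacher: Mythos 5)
Your proposal is correct and follows essentially the same route as the paper: the evolution equation for $\eta$ comes from the quotient (equivalently logarithmic) derivative together with \eqref{main_eq} evaluated at $x=0$, and the monotonicity comes from the fact that $\partial_t\log\phi(x,t)=\int_x^\infty g'/\Phi\,dz$ is nondecreasing in $x$ because $g'\le 0$, combined with the constant initial datum $\phi_0\equiv\eps_0$. Your remarks on the integrability of $g'/\Phi$ near $z=0$ (via $g'(0)=0$ and $\Phi(z,t)\gtrsim \eps(t)z$) are consistent with the paper's usage and need no change.
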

\begin{proof} A direct computation gives
\bea 
\eta_t(x, t) = \frac{\eps(t) \phi_t(x, t)- \phi(x, t) \eps_t(t)}{\eps^2(t)} = -\eta(x, t) \int_0^x \frac{g'(z)~dz}{\Phi(z, t)}.
\eea
To see that $\phi$ is monotone in $x$, observe that because of $g'<0$, we have for $x_1 < x_2$
\bea
\phi_t(x_1, t) \leq  \phi(x_1, t) \int_{x_2}^\infty \frac{g'(z)\,dz}{\Phi(z, t)}\leq \frac{\phi(x_1, t)}{\phi(x_2, t)} \phi_t(x_2, t)
\eea
and thus $(\log\phi(x_1, t))_{t}\leq (\log\phi(x_2, t))_{t}$, and the monotonicity follows since $\phi(x_1,0)=\phi(x_2,0)$.
\end{proof}

\begin{lemma}\label{lemma_cont}
A solution $\phi(x, t)$ satisfies the bound $\phi(x, t) \geq \eps(t)$ for all $x\in \R^+$. If $T>0$ is such that $\phi\in C^1(\R^+\times [0, T))$
and $\lim_{t\to T} \eps(t) > 0$, then the solution $\phi$ can be continued to a slightly larger time interval.  
\end{lemma}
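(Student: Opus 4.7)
The first assertion is an immediate consequence of Lemma \ref{lem_eq_eta}: since $\phi(\cdot, t)$ is monotone nondecreasing in $x$ for each fixed $t$ and $\phi(0, t) = \eps(t)$ by definition, one has $\phi(x, t) \geq \eps(t)$ for every $x \in \R^+$.

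For the continuation statement, the plan is to assume $\lim_{t \to T} \eps(t) = c > 0$ (so that $\eps(t) \geq c/2$ for $t$ close to $T$) and then to derive uniform-in-$t$ bounds on $\phi$ and $\phi_x$ on $\R^+$, which will enable me to define $\phi(\cdot, T)$ as a limit lying in $C^1(\R^+)$ with positive infimum, and then to restart the local existence theorem (Theorem~\ref{locEx}) from time $T$. Combining the first part of the lemma with the lower bound on $\eps(t)$ gives $\Phi(z, t) \geq \eps(t) z \geq (c/2)\, z$ for all $z \geq 0$. Since $g'(0) = 0$ and $g'$ is $C^1$ with compact support $[0, a]$, the quotient $|g'(z)|/z$ is bounded, so that
\[
\left|\int_x^\infty \frac{g'(z)\,dz}{\Phi(z, t)}\right| \leq \frac{2}{c}\int_0^a \frac{|g'(z)|}{z}\,dz =: M,
\]
uniformly in $(x, t)$. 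The main equation \eqref{main_eq} then yields $|\phi_t(x, t)| \leq M \phi(x, t)$, and by Gr\"onwall $\phi(x, t) \leq \phi_0(x) e^{MT}$. For $x \geq a$ the integrand vanishes so $\phi(x, t) \equiv \phi_0(x)$; hence all uniform estimates need only be obtained on $[0, a] \times [0, T)$.

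To estimate $\phi_x$, I would differentiate the integral form of \eqref{main_eq} in $x$ to get
\[
\phi_x(x, t) = \phi_0'(x) + \int_0^t \left[\phi_x(x, s)\int_x^\infty \frac{g'(z)\, dz}{\Phi(z, s)} - \phi(x, s)\,\frac{g'(x)}{\Phi(x, s)}\right] ds.
\]
Both terms inside the time integral are uniformly bounded on $[0, a] \times [0, T)$ by the previous estimates (noting again that $|g'(x)/\Phi(x, s)| \leq (2/c)\,|g'(x)|/x$ is bounded), so a second Gr\"onwall argument gives a uniform bound on $\phi_x$. Combining these derivative bounds with the expression for $\phi_t$, one obtains uniform bounds on $\phi_t$ and $\phi_{tx}$ as well, which makes the family $\{\phi(\cdot, t)\}_{t < T}$ Cauchy in $C^1([0, a])$ as $t \to T$. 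The limit $\phi(\cdot, T)$ is thus well-defined in $C^1(\R^+)$, agrees with $\phi_0$ for $x \geq a$, and satisfies $\inf_{\R^+} \phi(\cdot, T) \geq c > 0$. Applying Theorem~\ref{locEx} with $\phi(\cdot, T)$ as new initial datum produces the desired extension to a strictly larger time interval.

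The only mildly delicate point is keeping track of the singularity of the kernel $1/\Phi(z, t)$ at $z = 0$ and confirming that it is compensated by the vanishing of $g'(z)$ there; once the lower bound $\Phi(z, t) \geq (c/2) z$ is in hand this is routine, and no part of the argument presents a substantive obstacle beyond careful bookkeeping.
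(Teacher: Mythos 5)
Your proposal is correct and follows essentially the same route as the paper: the lower bound $\phi(x,t)\geq\eps(t)$ is read off from Lemma \ref{lem_eq_eta} (the paper uses $\eta\geq 1$, you use the monotonicity in $x$, both contained in that lemma), and the continuation is obtained by restarting Theorem \ref{locEx} from data at time $T$. The paper simply writes ``now apply Theorem \ref{locEx}''; your Gr\"onwall-based uniform $C^1$ bounds and the $C^1$ limit $\phi(\cdot,T)$ with positive infimum are exactly the details that step tacitly requires.
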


\begin{proof}
This follows from Lemma \ref{lem_eq_eta}. From equation \eqref{eq_eta} and $\eta(x, 0) = 1$ we get immediately
$\eta(x, t)\geq 1$, so that actually $\min_{\substack{x\in \R^+}} \phi(x, t) = \eps(t)$. Now apply Theorem \ref{locEx}.
\end{proof}
Let $\beta \in (0, 1)$ and define 
\beq\nonumber
l(t) = \eps^{\beta}(t).
\eeq
Let $\ka>1$ be such that
\beq\label{cond_ka}
\eps_0 \leq \ka \eps_0^{1/\beta}  
\eeq
We consider now the following bootstrap or barrier property \eqref{eq:boot}. 
\beq\tag{B}\label{eq:boot}
\text{$\phi(x, t) \leq \ka \eps(t)$ for all $x\in [0, l(t)]$}  
\eeq 
The continuity of $t\mapsto\phi(\cdot,t)$ implies that \eqref{eq:boot} holds for some short time interval $[0, \tau), \tau>0$.
We now extend the validity of the bootstrap property, with uniform $\ka$, to the whole existence interval 
of our solution $\phi$ by utilizing a kind of \emph{continuous induction}, or \emph{bootstrap argument}, or alternatively speaking a 
\emph{nonlocal maximum principle}.
\begin{itemize}
\item In order to state the argument in a clear fashion, we now let $$\phi\in C^1([0, T_s)\times \R^+)$$
be the solution of \eqref{main_eq} with data $(\eps_0, g(\cdot; \eps_0))$ given by \eqref{def_g},
defined on its maximal existence interval $[0, T_s)$. That is, we continue the local solution $\phi$ for as long
as $\eps(t)>0$. 
\end{itemize}
Note that by Theorem \ref{thm:finite} $T_s$ is finite. However, the in the following we will not use Theorem \ref{thm:finite} and prove blowup in finite time together with a characterization of the solution at the singular time.
Observe that $\lim_{t\to T_s} \eps(t) = 0$.

\begin{lemma}\label{lem:boundInnerZone}
Suppose \eqref{eq:boot} holds on the time interval $[0, T)$, $T>0$. Then 
\beq\nonumber
\eta(x, t) \leq \exp\left( \eps_0^2 K_1 l(t) \int_0^t \eps^{-1}(s)\, ds\right) \quad (x\in [0, l(t)], 0 \leq t < T).
\eeq 
Moreover,
\beq\label{upperBd}
\phi(x, t)\leq \ka x^{1/\beta} \quad (l(t) \leq x < \infty,~0\leq t <T).
\eeq
\end{lemma}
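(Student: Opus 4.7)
The plan is to establish the two bounds separately, both resting on elementary monotonicity arguments; the bootstrap assumption \eqref{eq:boot} is only needed for the second bound, while the first uses only the a priori lower bound on $\phi$ from Lemma \ref{lemma_cont} together with \eqref{eq_K1}.

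For the first inequality, I would start from the equation for $\eta$ in Lemma \ref{lem_eq_eta}. Combining the bound $\Phi(z, s) \ge \eps(s)\, z$ (which follows from $\phi \ge \eps$) with $|g'(z)| \le \eps_0^2 K_1 z$ from \eqref{eq_K1}, valid since $l(s) \le l(0) = \eps_0^\beta < 1$, gives
$$
\eta_t(x, s) \le \eta(x, s) \cdot \frac{\eps_0^2 K_1 x}{\eps(s)}.
$$
Since $\eps$ is strictly decreasing in time (because $\phi_t(0, t) = \eps(t)\int_0^\infty g'/\Phi < 0$), the function $l = \eps^\beta$ is strictly decreasing as well. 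For $x \in [0, l(t)]$ and any $s \in [0, t]$ we therefore have $x \le l(t) \le l(s)$, which lets me replace $x$ by the $s$-independent upper bound $l(t)$. Gr\"onwall's inequality, together with $\eta(x, 0) = 1$, then yields the claimed estimate.

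For the second inequality, the key observation is that $\phi(x, \cdot)$ is nonincreasing in $t$: indeed $\phi_t(x, t) = \phi(x, t)\int_x^\infty g'(z)/\Phi(z, t)\,dz \le 0$ because $g' \le 0$. Given $x \ge l(t)$, I would introduce an \emph{entry time}
$$
t_0 := \begin{cases} 0, & x \ge l(0), \\ \text{the unique } s \in (0, t]\text{ with } l(s) = x, & \text{otherwise}, \end{cases}
$$
the latter being well defined by continuity and strict monotonicity of $l$. In the first case, $\phi(x, 0) = \eps_0 \le \ka \eps_0 \le \ka x^{1/\beta}$, using $x^{1/\beta} \ge \eps_0^{\beta \cdot (1/\beta)} = \eps_0$ and $\ka > 1$. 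In the second case, $x \le l(s)$ throughout $[0, t_0]$, so by the bootstrap assumption $\phi(x, t_0) \le \ka \eps(t_0) = \ka\, l(t_0)^{1/\beta} = \ka x^{1/\beta}$. In either case, the time-monotonicity of $\phi$ propagates the bound to $s = t$: $\phi(x, t) \le \phi(x, t_0) \le \ka x^{1/\beta}$.

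The lemma thus reduces to a chain of monotonicities for $\eps$, $l$, and $\phi$. The only point that requires a little care is the entry-time construction together with its edge case $x \ge l(0)$, but both are handled by the same simple continuity-and-monotonicity observation.
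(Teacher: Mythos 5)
Your proof is correct and follows essentially the same route as the paper: the first bound via the $\eta$-equation from Lemma \ref{lem_eq_eta}, the lower bound $\Phi(z,s)\ge \eps(s)z$, \eqref{eq_K1} and Gr\"onwall; the second via the entry time $t_x$ with $l(t_x)=x$, the bootstrap bound at that instant, and time-monotonicity of $\phi$. The only (harmless) cosmetic differences are that you bound $x$ by $l(t)$ directly in the Gr\"onwall step and handle the edge case $x\ge \eps_0^\beta$ via $x^{1/\beta}\ge\eps_0$ and $\ka>1$ rather than quoting \eqref{cond_ka}.
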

\begin{proof}
We have $\phi(x, t)\geq \eps(t)$, so $\Phi(z, t) \geq \eps(t) z$. By \eqref{eq_eta}, \eqref{eq_K1}, we have for $0 < s < t$
and $x\in [0, l(t)]$:
$$
\eta_t(x,s) \leq \eta(x, s) \eps_0^2 K_1 \frac{l(s)}{\eps(s)}
$$
from which the first statement follows.

For all $l(t) \leq x \leq \eps_0^{\beta}$, let $t_x \leq t$ be the uniquely defined time such that 
$$
l(t_x) = \eps(t_x)^{\beta} = x.
$$
By the bootstrap assumption, $\phi(x, t_x)\leq \ka \eps(t_x) = \ka x^{1/\beta}$. 
From \eqref{main_eq} and the assumption that $g'< 0$ it follows that $\phi(x, t)$ is nonincreasing
in $t$ for fixed $x$. Consequently,
$\phi(x, t) \leq \phi(x, t_x)\leq \ka x^{1/\beta}$ for $t\geq t_x$.

If $x \geq \eps_0^{\beta}$, then we observe $\phi(x, t) \leq \eps_0 \leq \ka \eps_0^{1/\beta} \leq \ka x^{1/\beta}$
by condition \eqref{cond_ka} and also noting $x\geq \eps_0$. 
\end{proof}

The bootstrap assumption also gives a lower bound on $-\eps_t$. In the following Lemma, the structure of the Biot-Savart law of \eqref{outerModel} enters in 
a crucial way.

\begin{lemma}\label{lem:lowerBoundEps}
Suppose \eqref{eq:boot} holds on $[0, T)$. Then
\beq\label{lowerBdEps_t}
- \eps_t (t)\geq \frac{\eps_0^2 K_0 c_\beta}{2\ka} \eps^\beta 
\eeq
where $c_\beta = \frac{\beta (\beta+1)}{(2\beta+1)(1-\beta)}$, provided $\eps_0^{1-\beta} \leq 1/2$.
\end{lemma}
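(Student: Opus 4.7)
The plan is to derive the differential inequality directly from the evolution equation. Setting $x=0$ in \eqref{main_eq} and using $\phi(0,t)=\eps(t)$ gives the key identity
$$-\eps_t(t) \;=\; \eps(t)\int_0^\infty \frac{-g'(z)}{\Phi(z,t)}\,dz \;\ge\; \eps(t)\int_{l(t)}^{1} \frac{-g'(z)}{\Phi(z,t)}\,dz,$$
where the restriction to $z\in[l(t),1]$ is legitimate because the integrand is everywhere nonnegative (as $g'\le 0$ and $\Phi>0$). This is the natural window to work in, since it is precisely where both factors admit quantitative bounds: the numerator is controlled by \eqref{eq_K1}, which yields $-g'(z)\ge\eps_0^2K_0 z$ on $[0,1]$, and the denominator is controlled by Lemma~\ref{lem:boundInnerZone} together with the bootstrap assumption \eqref{eq:boot}. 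Note that $l(t)\le \eps_0^\beta < 1$ under our hypothesis, so the integration range is nonempty.

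For the upper bound on $\Phi$ on $[l(t),1]$, I would split $\Phi(z,t)=\int_0^{l(t)}\phi\,d\sigma+\int_{l(t)}^{z}\phi\,d\sigma$, apply $\phi\le\ka\eps$ on the inner piece (from \eqref{eq:boot}) and $\phi\le\ka\sigma^{1/\beta}$ on the outer piece (from \eqref{upperBd}), and use the identity $\eps(t)\,l(t)=l(t)^{(\beta+1)/\beta}$ to collect terms, obtaining
$$\Phi(z,t)\;\le\;\frac{\ka}{\beta+1}\bigl(l(t)^{(\beta+1)/\beta}+\beta\,z^{(\beta+1)/\beta}\bigr).$$
Inserting both estimates and performing the rescaling $z=l(t)\,w$ converts the integral on $[l(t),1]$ into an elementary integral on $w\in[1,\eps(t)^{-\beta}]$ whose value, after direct computation, is of the form $C(\beta)\cdot\bigl(\eps(t)^{\beta-1}-1\bigr)$ with $C(\beta)$ a rational function of $\beta$. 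Carefully collecting the $(\beta+1)^{-1}$ from the $\Phi$-bound, the $\beta/(1-\beta)$ produced by the antiderivative of $z^{-1/\beta}$, and the factor $(\beta+1)/(2\beta+1)$ arising from retaining the full two-term denominator rather than the crude majorant $\ka z^{(\beta+1)/\beta}$, gives exactly $c_\beta=\frac{\beta(\beta+1)}{(2\beta+1)(1-\beta)}$.

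To conclude, I would invoke the monotonicity $\eps(t)\le\eps_0$ (which follows from $-\eps_t\ge 0$, itself a consequence of $g'\le 0$ and \eqref{main_eq}) together with the hypothesis $\eps_0^{1-\beta}\le 1/2$ to deduce $\eps(t)^{\beta-1}\ge 2$, so the subtracted $-1$ is absorbed via $\eps(t)^{\beta-1}-1\ge\tfrac{1}{2}\eps(t)^{\beta-1}$. Multiplying through by the leading $\eps(t)$ then produces the stated estimate $-\eps_t\ge \frac{\eps_0^2 K_0 c_\beta}{2\ka}\,\eps^\beta$. The hard part will be the integration bookkeeping in the middle step: the crude majorant $\Phi\le\ka z^{(\beta+1)/\beta}$ (obtained by dropping the $l(t)^{(\beta+1)/\beta}$ term via $l(t)\le z$) is sufficient to produce a bound of the correct order $\eps^\beta$ but yields a different, less sharp constant; producing the precise rational $c_\beta$ requires retaining both terms in the denominator throughout the substitution and computing without further estimation until the very end.
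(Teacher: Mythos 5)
Your proposal follows the paper's own route: restrict the integral to $z\in[l(t),1]$, use \eqref{eq_K1} for $-g'$, bound $\Phi$ from above by combining the bootstrap \eqref{eq:boot} on $[0,l(t)]$ with \eqref{upperBd} on $[l(t),z]$, integrate a power of $z$, and absorb the $-1$ using $\eps(t)\le\eps_0$ and $\eps_0^{1-\beta}\le 1/2$. All of these steps are valid, and your $\Phi$-bound
$\Phi(z,t)\le\frac{\ka}{\beta+1}\bigl(l(t)^{1+\frac1\beta}+\beta z^{1+\frac1\beta}\bigr)$
is in fact slightly sharper than the paper's
$\Phi(z,t)\le\ka\bigl(l(t)^{1+\frac1\beta}+\frac{\beta}{\beta+1} z^{1+\frac1\beta}\bigr)$
(the paper discards the term $-\frac{\ka\beta}{\beta+1}l^{1+\frac1\beta}$), so your estimates do yield \eqref{lowerBdEps_t}, with room to spare.

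The one point to correct is your final accounting of where $c_\beta$ comes from; as stated it is backwards. Retaining the full two-term denominator ``without further estimation'' is not actually an option: after $z=l(t)w$ you face $\int_1^{\eps^{-\beta}} w\,(1+\beta w^{1+\frac1\beta})^{-1}dw$, which has no elementary closed form for general $\beta$ and is not of the form $C(\beta)(\eps^{\beta-1}-1)$. In the paper the factor $\frac{\beta+1}{2\beta+1}$ arises precisely from the step you call the crude majorant: bounding $l(t)^{1+\frac1\beta}\le z^{1+\frac1\beta}$ inside the paper's (coarser) $\Phi$-bound gives the denominator $\ka\frac{2\beta+1}{\beta+1}z^{1+\frac1\beta}$, and then the antiderivative of $z^{-1/\beta}$ supplies $\frac{\beta}{1-\beta}$, producing exactly $c_\beta=\frac{\beta(\beta+1)}{(2\beta+1)(1-\beta)}$. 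If you apply the same majorant to your sharper $\Phi$-bound you get the denominator $\ka z^{1+\frac1\beta}$ and hence the constant $\frac{\beta}{1-\beta}>c_\beta$; since the lemma only asserts a lower bound, this still proves \eqref{lowerBdEps_t}, but it does not reproduce ``exactly'' $c_\beta$, and no exact computation with the two-term denominator will either. So the plan succeeds, but the claimed mechanism for hitting the precise constant should be replaced by the majorant step.
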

\begin{proof}
For $z\geq l(t)$, using the upper bound \eqref{upperBd} and \eqref{eq:boot} yields
\begin{align*}
\Phi(z, t) &= \Phi(l(t), t) + \int_{l(t)}^z \phi(\si,t)\, d\si
\leq \Phi(l(t), t) + \int_{l(t)}^z \ka \si^{\frac{1}{\beta}}\, d\si\\
&\leq \Phi(l(t), t) + \frac{\ka \beta}{\beta+1} (z^{\frac{1}{\beta} + 1}- l^{\frac{1}{\beta}+1}(t))\le \ka \eps(t) l(t)  + \frac{\ka \beta}{\beta+1} (z^{\frac{1}{\beta} + 1}- l^{\frac{1}{\beta}+1}(t))\\
&\le \ka\left( l^{\frac{1}{\beta}+ 1}(t)   + \frac{\beta}{\beta+1}z^{\frac{1}{\beta} + 1}\right).
\end{align*}
Using this and \eqref{eq_K1}
\begin{align*}
\int_0^\infty \frac{-g'(z) \,dz}{\Phi(z,t)} 
&\ge \frac{\eps_0^2 K_0}{\ka} \int_{l(t)}^1 \frac{z ~dz}{l^{1+\frac{1}{\beta}}(t) + \frac{\beta}{1+\beta} z^{1+\frac{1}{\beta}}}
\ge \frac{\eps_0^2 K_0}{\ka} \int_{l(t)}^1 \frac{z~dz}{(1+ \frac{\beta}{1+\beta}) z^{1+\frac{1}{\beta}}}\\
&= \frac{\eps_0^2 K_0}{\ka} \left(1+\frac{\beta}{1+\beta}\right)^{-1}\frac{\beta}{1-\beta} (l^{1-\frac{1}{\beta}}(t)-1) \\
&= \frac{\eps_0^2 K_0 c_\beta}{\ka} (\eps^{\beta-1}(t)-1).
\end{align*}
Thus
\beq\nonumber
-\eps_t (t) \ge \eps(t) \int_0^\infty\frac{-g'(z)~dz}{\Phi(z, t)} \ge \frac{\eps_0^2 K_0 c_\beta}{\ka}\eps^{\beta}(t) (1-\eps^{1-\beta}(t)) \geq  \frac{\eps_0^2 K_0 c_\beta}{2 \ka}\eps^{\beta}(t)
\eeq
provided $\eps_0^{1-\beta}\leq 1/2$.
\end{proof}

\begin{lemma}\label{lemma_5}
Suppose the following three conditions hold:
\begin{align}\label{threecond}
\begin{split}
\eps_0 &\leq \ka \eps_0^{1/\beta} , \\
\eps_0^{1-\beta} &\leq 1/2,\\
\frac{2 K_1}{K_0 c_\beta \beta} &< \frac{\log \ka}{\ka}.
\end{split}
\end{align}
Then \eqref{eq:boot} holds on the whole interval $[0, T_s)$.
\end{lemma}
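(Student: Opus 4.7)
\emph{Plan of proof.} The argument is a standard continuous induction (bootstrap) argument. Define
\[
T^* = \sup\{T\in [0, T_s) : \eqref{eq:boot} \text{ holds on } [0, T]\}.
\]
Continuity of $\phi$ and the fact that $\phi(x,0)=\eps_0=\eps(0)$ give $T^*>0$. The goal is to show $T^*=T_s$, and I will do this by deriving a strictly stronger bound than \eqref{eq:boot} on $[0, T^*)$, which by continuity would allow the bootstrap property to be extended past $T^*$ and contradict maximality if $T^*<T_s$.

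Assume therefore for contradiction that $T^*<T_s$. Throughout $[0, T^*)$ the bootstrap property holds, so both Lemma~\ref{lem:boundInnerZone} and Lemma~\ref{lem:lowerBoundEps} apply, the latter being legitimate thanks to the second condition in \eqref{threecond}. From Lemma~\ref{lem:lowerBoundEps} we have $-\eps_t(s)\geq C\eps^{\beta}(s)$ with $C=\eps_0^2 K_0 c_\beta/(2\ka)$. Changing variable $u=\eps(s)$ in the time integral and using this lower bound yields, for any $t\in[0, T^*)$,
\[
\int_0^t \eps^{-1}(s)\, ds \;\leq\; \frac{1}{C}\int_{\eps(t)}^{\eps_0} u^{-1-\beta}\, du \;\leq\; \frac{\eps(t)^{-\beta}}{C\beta}.
\]
Inserting this and $l(t)=\eps(t)^\beta$ into the exponential bound of Lemma~\ref{lem:boundInnerZone} gives, for $x\in[0, l(t)]$,
\[
\eta(x, t)\;\leq\;\exp\!\left(\eps_0^2 K_1\, l(t)\int_0^t \eps^{-1}(s)\, ds\right) \;\leq\; \exp\!\left(\frac{\eps_0^2 K_1}{C\beta}\right) \;=\; \exp\!\left(\frac{2K_1\ka}{K_0 c_\beta \beta}\right).
\]
By the third condition in \eqref{threecond}, $\frac{2K_1\ka}{K_0 c_\beta \beta}<\log\ka$, so there exists $\al<1$ (namely $\al = \frac{2K_1\ka}{K_0 c_\beta \beta \log\ka}$) with
\[
\phi(x, t)\;\leq\;\ka^{\al}\,\eps(t)\qquad (x\in[0, l(t)],\; t\in[0, T^*)).
\]

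This strictly improves the bootstrap bound \eqref{eq:boot}. By continuity of $\phi$ and $\eps$ at $T^*$ we get $\phi(x, T^*)\leq \ka^{\al}\eps(T^*)$ for $x\in[0, l(T^*)]$, and then continuity in $t$ of $\phi$, $\eps$ and $l$ in a neighborhood of $T^*$ (note $\eps(T^*)>0$ since $T^*<T_s$) yields some $\de>0$ such that $\phi(x, t)\leq \ka\, \eps(t)$ for all $t\in[T^*, T^*+\de)$ and $x\in[0, l(t)]$. This contradicts the maximality of $T^*$, so $T^*=T_s$.

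The main (minor) obstacle is the bookkeeping in the estimate of $\int_0^t \eps^{-1}(s)\, ds$, which is the mechanism by which the sharp decay rate of $\eps$ from Lemma~\ref{lem:lowerBoundEps} gets converted into a uniform bound on the exponent in Lemma~\ref{lem:boundInnerZone}; the role of the third condition in \eqref{threecond} is exactly to make this converted bound fit inside $\log\ka$.
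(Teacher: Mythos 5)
Your proof is correct and follows essentially the same route as the paper: a continuous-induction argument in which Lemma \ref{lem:lowerBoundEps} converts the decay of $\eps$ into the bound $\int_0^t \eps^{-1}(s)\,ds \leq \eps(t)^{-\beta}/(C\beta)$, which is fed into Lemma \ref{lem:boundInnerZone} so that the third condition of \eqref{threecond} yields the strict improvement $\phi \leq \ka^{\al}\eps$ with $\al<1$. The only cosmetic difference is at the critical time: the paper uses monotonicity of $\phi$ in $x$ to pin down the equality $\phi(l(T),T)=\ka\eps(T)$ and contradict it directly, whereas you contradict maximality of $T^*$ by propagating the strict bound slightly past $T^*$ via continuity; both are standard and equivalent closings of the bootstrap.
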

\begin{proof} Observe first that \eqref{eq:boot} holds for $\phi(\cdot, 0)$ since $\ka > 1$.
Because of continuity in time, there exists a small time interval $[0, \de)$ in which \eqref{eq:boot}
holds. Suppose \eqref{eq:boot} does not hold on the whole interval $[0, T_s)$, and let
$$
T := \sup\{ t\in [0, T_s): \text{$\eqref{eq:boot}$ holds on $[0, t]$}\} < T_s.
$$
Observe that \eqref{eq:boot} holds on $[0, T)$ and that
the monotonicity of $\phi$ implies that
\beq\label{eq_contr}
\phi(l(T), T) = \ka \eps(T).
\eeq
The first two conditions of \eqref{threecond} allow us to use 
Lemma \ref{lem:lowerBoundEps} to estimate
\begin{align*}
\eps_0^2 K_1 l(T) \int_0^T \frac{ds}{\eps(s)} &= \eps_0^2 K_1 l(T) \int_0^T \frac{-\eps_t(s)}{-\eps_t(s) \eps(s)}\, ds \leq \frac{2 K_1 \ka}{K_0 c_\beta} l(T)  \int_0^T \frac{- \eps_t(s)\,ds}{\eps^{1+\beta}(s)} \\
&\leq \frac{2 K_1 \ka}{K_0 c_\beta \beta} l(T)  (\eps^{-\beta}(T)-\eps_0^{-\beta}) \leq \frac{2 K_1 \ka}{K_0 c_\beta \beta}  (1-(\eps(T)/\eps_0)^{\beta}) \\
&\leq  \frac{2 K_1 \ka}{K_0 c_\beta \beta}.
\end{align*}
Thus by Lemma \ref{lem:boundInnerZone}, 
$$
\eta(l(T), T) \leq \exp\left(\frac{2 K_1 \ka}{K_0 c_\beta \beta}\right).
$$
Now, using the third line of \eqref{threecond},
\bea
\phi(l(T), T)\leq \eps(T) \eta(l(T), T) \leq \eps(T)\exp\left(\frac{2 K_1 \ka}{K_0 c_\beta \beta}\right) < \ka \eps(T),
\eea
contradicting \eqref{eq_contr}.
\end{proof}

\emph{Conclusion of the proof of Theorem \ref{blowupThm}}.
We show that it is possible to choose $0< \beta < 1, 0< \eps_0 < 1, \ka>0$ such that \eqref{threecond}
are satisfied. This is done as follows: Fix $\ka > 2$ and observe that the first and second condition of
\eqref{threecond} together are equivalent to 
\begin{align}\label{cond2}
-\frac{\beta}{1-\beta} \log \ka \leq \log \eps_0 \leq -\frac{\log 2}{1-\beta}.
\end{align}
\eqref{cond2} holds for sufficiently small $\eps_0$ provided $\beta$ can be chosen such that
$-\frac{\beta}{1-\beta} \log \ka < -\frac{\log 2}{1-\beta}$, i.e.
\begin{align}\label{cond3}
\log \ka > \frac{\log 2}{\beta}.
\end{align}
Since $\ka > 2$, \eqref{cond3} holds if $\beta$ is close enough to $1$. Also the third condition of \eqref{threecond}
holds if $\beta$ is close enough to $1$, which follows from $c_\beta\to \infty$ as $\beta\to 1$.

Taking into account \eqref{threecond}, we can now apply Lemma \ref{lemma_5} to see that \eqref{eq:boot} holds on the  
whole existence interval $[0, T_s)$ of the solution. \eqref{lowerBdEps_t} now implies that 
\bea
\eps_t(t) \leq - C \eps^{\beta}(t)
\eea
for some positive, fixed $C>0$, and hence $T_s$ must be finite and $l(T_s)=\eps^\beta(T_s)=0$.

We show now that $\te$ defined by \eqref{def_te} is regular outside the origin, even at time $t=T_s$.
Note that $\phi(x, t)$ for fixed $x$ is monotone nonincreasing in $t$ and $\phi(x, t) \in [0, \eps_0]$ for
$t < T_s$. Hence the pointwise limit
\beq\nonumber
\phi(x, t) = \lim_{t\to T_s} \phi(x, t)
\eeq
exists and is $\geq 0$ everywhere. 
Let $s_0\geq 0$ be the infimum of all numbers $x > 0$ with the property that $\phi(x, T_s) > 0$ (see Figure \ref{f2}).
Note that the set over which the infimum is taken is not empty, since $\phi(x, t) = \eps_0 > 0$ for all $0 \leq t< T_s$ if $x$ is 
outside the support of $g'$.  Moreover, 
\bea\label{lowerbd_Phi}
\Phi(x, t) \geq \eps_0 (x-a) \quad (x \geq a, 0\leq t < T_s)
\eea
if $\operatorname{supp} g' = [0, a]$.

From $\Phi(x, t) = \int_0^x \phi(\si, t)\, d\si$, we see that $\Phi(x, t)$ is nonincreasing in time for fixed $x$
and $\Phi(x, T_s)\geq 0$. Consequently, the pointwise limit $\Phi(x, T_s)$ exists, and
\bea
\text{$\Phi(x, T_s) = 0$ for $0 \leq x \leq s_0$, and $\Phi(x, T_s) > 0$ for all $x > s_0$.}
\eea
Observe that $\Phi(x,T_s)$ is continuous and strictly increasing in $x$ for $x\ge s_0$ since $\phi(x,T_s)>0$ for $x>s_0$.
An elementary argument (using e.g. the fact that $\Phi(x, t)$ is nonincreasing) proves that
as $t\to T_s$, $\Phi^{-1}(y, t)$ converges for all $y > 0$ to a limit $\Phi^{-1}(y, T_s)$. 
The function $\Phi^{-1}(\cdot, T_s)$ is the inverse of $\Phi(\cdot, T_s)$ restricted to the
interval $(s_0, \infty)$. \eqref{def_te} shows that the pointwise limit $\te(y, T_s)$ exists for all $y> 0$.
By Lemma \ref{reg1} below,  $\phi(\cdot, T_s) \in C^1(s_0, \infty)$ and so again by \eqref{rel:basic} $\te(\cdot, T_s)\in C^2(0,\infty)$.
This proves that $\te$ remains regular outside the origin even at the singular time $T_s$.

Finally, we prove the bound \eqref{cuspBound}. First we look at the case $s_0 = 0$. A key observation
is that from \eqref{upperBd} we get the upper bound
\begin{align*}
\Phi(z, T_s) \leq C(\beta, \ka) z^{\frac{1}{\beta}+1} \quad (0 \leq z < \infty),
\end{align*}
implying
\beq\label{eq_PhiInv}
\Phi^{-1}(y, T_s) \geq \widetilde C(\beta, \ka) y^{\frac{\beta}{\beta+1}}.
\eeq
From \eqref{rel:basic} we get, using the substitution $\Phi^{-1}(\tilde y, t) = z$,
\begin{align}\nonumber
\te(y, t) = \te(0, t) + \int_0^y \frac{g'(\Phi^{-1}(\tilde y, t))}{\phi(\Phi^{-1}(\tilde y, t))} d\tilde y = \te_0(0) + \int_0^{\Phi^{-1}(y, t)} g'(z)\, dz
\end{align}
Now pass to the limit $t\to T_s$ and continue the estimate by using first \eqref{eq_K1} and then \eqref{eq_PhiInv},
\begin{align}\nonumber
\int_0^{\Phi^{-1}(y, T_s)} g'(z)\, dz \leq  - C(\eps_0, K_0, \beta, \ka) y^{\frac{2\beta}{\beta+1}}
\end{align}
for all $y\in [0, \Phi(1, T_s)]$. Note that $\Phi(1, T_s)>0$ because of $s_0=0$.

Since $\te(y, T_s)\leq \te(\Phi(1, T_s), T_s)$ for $y \geq \Phi(1, T_s)$ 
we can define $\nu = \frac{2\beta}{\beta+1} < 1$ and adjust the constant such that \eqref{cuspBound}
holds (note that $\te(y, T_s)$ has compact support).

If $s_0 > 0$, we note $\sup_{\R+}\theta(y, T_s)  < \te(0, T_s) = \te_0(0)$. Hence, the constant $c$ can be adjusted such that 
\eqref{cuspBound} holds.
This concludes the proof of Theorem \ref{blowupThm}.

\begin{lemma}\label{reg1}
Suppose $\lim_{t\to T_s}\Phi(x_0, t) > 0$. Then $\phi(\cdot, T_s) \in C^1[x_0, \infty)$.
\end{lemma}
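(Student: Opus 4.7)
\medskip

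\noindent\emph{Proof proposal.} The plan is to exploit the assumption $\lim_{t\to T_s}\Phi(x_0,t)>0$ to obtain a uniform lower bound on $\Phi(z,s)$ for all $z\ge x_0$ and all $s\in[0,T_s)$, and then to differentiate the explicit exponential solution formula for $\phi(x,t)$. First I would note that since $\phi>0$, $\Phi(\cdot,s)$ is strictly increasing in $x$, while $\Phi(x,\cdot)$ is nonincreasing in $t$ (this monotonicity has already been established in the main proof). Combining both, for every $z\ge x_0$ and $s\in[0,T_s)$ one has $\Phi(z,s)\ge \Phi(x_0,s)\ge \Phi(x_0,T_s)=:c_0>0$.

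Next I would integrate the ODE \eqref{main_eq} in time to write
\[
\phi(x,t)=\eps_0\exp\!\Bigl(\int_0^t A(x,s)\,ds\Bigr),\qquad A(x,s):=\int_x^\infty\frac{g'(z)}{\Phi(z,s)}\,dz.
\]
For $x\ge x_0$, the lower bound $\Phi(z,s)\ge c_0$ on the support of $g'$ that lies in $[x,\infty)$ gives $|A(x,s)|\le \|g'\|_{L^1}/c_0$, hence $\phi(x,t)$ is bounded above and below by positive constants uniformly in $(x,t)\in[x_0,\infty)\times[0,T_s)$. Differentiating formally in $x$ yields
\[
\phi_x(x,t)=\phi(x,t)\cdot\bigl(-g'(x)\bigr)\int_0^t\frac{ds}{\Phi(x,s)},
\]
and the same lower bound forces $|\phi_x(x,t)|\le C$ uniformly on $[x_0,\infty)\times[0,T_s)$.

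I would then pass to the limit $t\to T_s$. Since $|A(x,s)|\le \|g'\|_{L^1}/c_0$, the exponent $\int_0^t A(x,s)\,ds$ converges uniformly in $x\ge x_0$ to $\int_0^{T_s}A(x,s)\,ds$, giving uniform convergence $\phi(x,t)\to\phi(x,T_s)$ on $[x_0,\infty)$. For the derivatives, the monotonicity of $s\mapsto 1/\Phi(x,s)$ together with the domination $1/\Phi(x,s)\le 1/c_0$ yields, by a straightforward splitting,
\[
\left|\phi_x(x,t)-\phi(x,T_s)(-g'(x))\int_0^{T_s}\!\frac{ds}{\Phi(x,s)}\right|\le C\|g'\|_\infty\bigl(|\phi(x,t)-\phi(x,T_s)|+(T_s-t)\bigr),
\]
which tends to zero uniformly in $x\ge x_0$. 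Finally, a standard calculus argument (uniform convergence of derivatives plus pointwise convergence of the functions) shows that $\phi(\cdot,T_s)$ is differentiable on $[x_0,\infty)$ with derivative $\phi(x,T_s)(-g'(x))\int_0^{T_s}ds/\Phi(x,s)$, and continuity of this limit in $x$ follows from continuity of $g'$ and dominated convergence applied to $x\mapsto\int_0^{T_s}ds/\Phi(x,s)$.

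The only delicate point is justifying that the time integral $\int_0^{T_s}ds/\Phi(x,s)$ depends continuously on $x$ up to the singular time; but since $\Phi(x,s)\ge c_0$ uniformly for $x\ge x_0$, and $\Phi$ is continuous in $x$, the dominated convergence theorem handles this immediately. The rest is bookkeeping, and no argument beyond the standard interchange of limits is needed.
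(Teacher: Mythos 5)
Your proof is correct and follows essentially the same route as the paper: both rest on the uniform-in-time lower bound $\Phi(z,s)\ge\Phi(x_0,s)\ge\lim_{t\to T_s}\Phi(x_0,t)>0$ for $z\ge x_0$, and then on uniform control of $\phi$ and $\phi_x$ on $[x_0,\infty)$ up to the (finite) time $T_s$ in order to pass to the limit in $C^1$. The only difference is bookkeeping: the paper bounds $\|\phi_t(\cdot,t)\|_{C^1[x_0,\infty)}$ uniformly in $t$ and concludes by a Cauchy-in-time argument, whereas your explicit exponential solution formula and the resulting expression for $\phi_x$ make that tersely stated bound fully explicit.
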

\begin{proof}
Since for fixed $t$, $\Phi(x, t)$ is non-decreasing in $x$, $\Phi(x, t) \geq \Phi(x_0, t)>0$ for all $x \geq x_0$, $t < T_s$. Using this lower bound for $\Phi$,
the $C^1$-norm of the right-hand-side of equation \eqref{main_eq} on
the interval $[x_0, \infty)$ can be bounded:
\bea
\|\phi_t(\cdot, t)\|_{C^1[x_0, \infty)} \leq C(x_0)
\eea
for all $t< T_s$, where $C(x_0)$ depends on $x_0$ but not on $t$. This implies the convergence $\phi(\cdot, t)\to \phi(\cdot, T_s)$ 
in $C_1([x_0, \infty))$ as $t\to T_s$.
\end{proof}

\begin{remark}\label{why_eps_0}
It is not necessary to choose $(\eps_0, g(\cdot; \eps_0))$ with sufficiently small $\eps_0$
for the pair $(\phi_0, g)$. In fact, we could have worked with the canonical choice $(1, \te_0)$. 
In this case, one first proves a bound of the form $-\eps_t \leq k \eps$ with $k>0$. This means
that after some positive time $T_0$, $\eps(t)$ is small enough such that \eqref{threecond} can
be satisfied.
\end{remark}

\begin{remark}
For the moment, we leave the question open if a needle-like discontinuity can really form, or if generically a cusp is 
obtained at the singular time. In this context, it is interesting to observe that a suitable lower bound 
\beq
\phi(x, T_s) \geq c_0 x^{p}
\eeq
with $c_0>0$, $p>1$ would suffice to exclude the needle scenario, and would nicely complement \eqref{upperBd}.
A lower bound for $\phi(\cdot, t)$ for all $t < T_s$ was obtained by A.~Zlato\v{s} \cite{ZlatosPer}, however,
his lower bound does not give any information in the limit $t\to T_s$.
\end{remark}

\section{Acknowledgements}
The authors would like to thank Alexander Kiselev for helpful
discussions and suggestions greatly improving the presentation. 
We thank Andrej Zlato\v{s} for helpful discussions and his remarks on a lower bound for
the function $\phi$. VH expresses his gratitude to
the German Research Foundation (DFG) for continued support through grants FOR 5156/1-1 and FOR 5156/1-2,
without this research could not have been undertaken. VH also acknowledges partial support by NSF grant NSF-DMS 1412023. 

\end{document}